\newtheorem{theorem}{Theorem}[section]
\newtheorem{remark}{Remark}[section]
\newtheorem{defn}{Definition}[section]
\newtheorem{lemma}{Lemma}[section]
\newtheorem{corollary}{Corollary}[section]
\let\pa=\partial
\let\f=\frac
\let\D=\Delta
\let\r=\rho
\let\vr=\varrho
\def\Om{\Omega}
\def\na{\nabla}
\def\cb{{\mathcal B}}
\def\D{{\mathcal D}}
\def\E{{\mathcal E}}
\def\cm{{\mathcal M}}
\renewcommand{\div}{{\rm div}}
\newcommand{\lb}{\left(}
\newcommand{\rb}{\right)}
\DeclareMathOperator{\divv}{div}
\DeclareMathOperator{\loc}{loc}
\DeclareMathOperator{\curl}{curl}
\title{Global stability for the compressible isentropic magnetohydrodynamic equations in 3D bounded domains with Navier-slip boundary conditions
\thanks{Liu's research was partially supported by Natural Science Foundation of Changchun Normal University (No. CSJJ2024003GZR), Wu's research was partially supported by Fujian Alliance of Mathematics (No. 2023SXLMMS08), and Zhong's research was partially supported by Fundamental Research Funds for the Central Universities (No. SWU--KU24001) and National Natural Science Foundation of China (No. 12371227).}
}
\author{Yang Liu$\,^{\rm 1}\,$,\ Guochun Wu$\,^{\rm 2}\,$,\
	Xin Zhong$\,^{\rm 3}\,$ {\thanks{E-mail addresses:
			liuyang0405@ccsfu.edu.cn (Y. Liu), guochunwu@126.com (G. Wu),  xzhong1014@amss.ac.cn (X. Zhong).}}\date{}\\
	\footnotesize $^{\rm 1}\,$
College of Mathematics, Research Institute for Scientific and Technological Innovation,
\\ \footnotesize Changchun Normal University, Changchun 130032, P. R. China\\
\footnotesize $^{\rm 2}\,$
School of Mathematics and Statistics, Xiamen University of Technology, Xiamen 361024, P. R. China\\
\footnotesize $^{\rm 3}\,$ School of Mathematics and Statistics, Southwest University, Chongqing 400715, P. R. China}
\begin{document}
\maketitle

\begin{abstract}
We study the global stability of large solutions to the compressible isentropic magnetohydrodynamic equations in a three-dimensional (3D) bounded domain with Navier-slip boundary conditions. It is shown that the solutions converge to an equilibrium state exponentially in the $L^2$-norm provided the density is essentially uniform-in-time bounded from above. Moreover, we also obtain that the density and magnetic field converge to their equilibrium states exponentially in the $L^\infty$-norm if additionally the initial density is bounded away from zero. These greatly improve the previous work in (J. Differential Equations 288 (2021), 1--39), where the authors considered the torus case and required the $L^6$-norm of the magnetic field to be uniformly bounded as well as zero initial total momentum and an additional restriction $2\mu>\lambda$ for the viscous coefficients. This paper provides the first global stability result for large strong solutions of compressible magnetohydrodynamic equations in 3D general bounded domains.
\end{abstract}

\textit{Key words and phrases}. Compressible isentropic magnetohydrodynamic equations; global stability; large solutions; Navier-slip boundary conditions.

2020 \textit{Mathematics Subject Classification}. 76W05; 76N10; 74H40.


\section{Introduction}
Let $\Omega\subset\mathbb{R}^3$ be a domain, the evolution of electrically fluids in the presence of magnetic field occupying in $\Omega$ is described by the compressible isentropic magnetohydrodynamic (MHD for short) equations
\begin{align}\label{a1}
\begin{cases}
\rho_t+\divv (\rho u)=0,\\
(\rho u)_t+\divv (\rho u\otimes u)+\nabla P-\mu\Delta u-(\mu+\lambda)\nabla\divv  u=(\nabla\times H)\times H,\\
H_t-\nabla\times(u\times H)=-\nabla\times(\nu\nabla\times H),\\
\divv H=0,
\end{cases}
\end{align}
where the unknowns $\rho$, $u=(u^1, u^2, u^3)$, $H=(H^1, H^2, H^3)$, and $P=a\rho^\gamma\ (a>0,\gamma>1)$ stand for the density, velocity, magnetic field, and pressure, respectively. The constants $\mu$ and $\lambda$ represent the shear viscosity and the bulk viscosity respectively satisfying the physical restrictions
\begin{align*}
\mu>0,\quad 2\mu+3\lambda\ge 0,
\end{align*}
while $\nu>0$ is the resistivity coefficient. The system \eqref{a1} is a combination of the compressible isentropic Navier--Stokes equations of fluid dynamics and Maxwell's equations of electromagnetism. The electric field $E$ can be written in terms of the magnetic field $H$ and the velocity $u$ as follows
\begin{align*}
E=\nu\curl H-u\times H.
\end{align*}
Thus, the electric field $E$ does not appear in $\eqref{a1}_3$ due to Faraday's law.
Without loss of generality, throughout the paper we assume that
$$ a=\nu=1.$$

Let $\Omega\subset\mathbb{R}^3$ be a simply connected smooth bounded domain,
we consider \eqref{a1} with the initial data
\begin{align}\label{1.2}
\rho(x, 0)=\rho_0(x),\ \rho u(x, 0)=\rho_0u_0(x), \ H(x, 0)=H_0(x), \quad x\in \Omega,
\end{align}
and slip boundary conditions
\begin{align}\label{a4}
u\cdot n=0, \ \curl u\times n=0,\,
H\cdot n=0, \ \curl H\times n=0,\quad & x\in\partial\Omega,\ t>0,
\end{align}
where $n$ is the unit outward normal vector to $\pa\Omega$.

Recently there have been numerous studies on the global existence of solutions to the compressible MHD equations \eqref{a1}. Kawashima \cite{SK83} established the global unique smooth solution with the initial data being close to a non-vacuum equilibrium in $H^3(\mathbb{R}^3)$. Based on Lions--Feireisl's framework \cite{L98,F04}, Hu and Wang \cite{HW10} proved the global existence of finite energy weak solutions in a three-dimensional (3D for short) bounded domain with Dirichlet boundary conditions for the velocity and magnetic field. With the help of the {\it effective viscous flux}, Suen--Hoff \cite{SH12} and Liu--Yu--Zhang \cite{LYZ13} obtained the global existence of weak solutions with the initial density and gradients of the initial velocity/magnetic field being bounded in $L^2$ and $L^\infty$, respectively. Meanwhile, Li--Xu--Zhang \cite{LXZ13} showed the global existence of classical solutions to the 3D Cauchy problem with small initial energy but possibly large oscillations and vacuum states, which generalized the corresponding result in \cite{HLX12} for the compressible Navier--Stokes system. Later on, Hong--Hou--Peng--Zhu \cite{HHPZ} obtained global smooth solutions by requiring $\big[(\gamma-1)^{\frac19}+\nu^{-\frac14}\big]E_0$ to be suitably small, where $E_0$ is the initial energy. More recently, global strong solutions to the initial-boundary value problem with Navier-slip boundary conditions were investigated in \cite{CHPS23,CHS23}. Moreover, there are some interesting results on compressible non-resistive MHD equations, please refer to \cite{DWZ24,JJ19,WZ23,WZ22} and references therein.

Some important progress has also been made regarding the global stability and long-time behavior of solutions to the 3D compressible isentropic MHD equations. Among them, Wu--Zhang--Zou \cite{WZZ21} obtained the optimal time-decay rates of weak solutions with discontinuous initial data, i.e., for $t\geq1$,
\begin{align*}
\begin{cases}
\|(\rho-\tilde{\rho},u,H)(\cdot,t)\|_{L^r}\leq Ct^{-\frac32(1-\frac1r)},\ 2\leq r\leq\infty,\\
\|(\nabla u,\nabla H)(\cdot,t)\|_{L^2}\leq Ct^{-\frac54}, \\
\|(\nabla^2H,H_t)(\cdot,t)\|_{L^2}\leq Ct^{-\frac74},
\end{cases}
\end{align*}
where $\tilde{\rho}$ is the constant far-field density.
For sufficiently smooth and small perturbations of a stable reference state, Zhang and Zhao \cite{ZZ10} established time decay estimates of global classical solutions to the 3D Cauchy problem under an additional $L^p$ assumption. Li and Yu \cite{LY11} derived the optimal temporal decay estimates on the strong solutions with the initial perturbation belonging to $L^1$-spaces. The key issue of the method depends heavily on the linearized equations and the initial data being small perturbations of a equilibrium.
At the same time, several authors dealt with the long-time behavior of solutions when the initial energy is suitably small. It was shown in \cite{LXZ13} that, for any $p>2$, the unique classical solution to the 3D Cauchy problem admits the long-time
behavior
\begin{align}\label{z8}
\lim_{t\rightarrow\infty}\big(\|\rho(\cdot,t)-\tilde{\rho}\|_{L^p}
+\|\nabla u\|_{L^2}+\|\nabla H\|_{L^2}\big)=0.
\end{align}
If additional $\rho_0\in L^1(\mathbb{R}^3)$, L{\"u}--Shi--Xu \cite{LSX16} obtained that,
for any $t\geq1$,
\begin{align*}
\begin{cases}
\|\nabla H(\cdot,t)\|_{L^p}\leq Ct^{-1+\frac{6-p}{4p}},\ &p\in[2,6],\\
\|\nabla u(\cdot,t)\|_{L^p}\leq Ct^{-1+\frac1p},\ &p\in[2,6], \\
\|P(\cdot,t)\|_{L^r}\leq C(r)t^{-1+\frac1r},\ &r\in(1,\infty).
\end{cases}
\end{align*}
When $\Omega$ is a bounded domain and $\bar\rho_0=\frac{1}{|\Omega|}\int_\Omega \rho_0dx$, Chen--Huang--Peng--Shi \cite{CHPS23} derived that there exist positive constants $C$ and $\eta_0$ independent of $t$ such that, for any $t>0$, $r\in[1,\infty)$, and $p\in[1,6]$,
\begin{align}\label{z7}
\|\rho(\cdot,t)-\bar\rho_0\|_{L^r}+\|u(\cdot,t)\|_{W^{1,p}}
+\|\sqrt{\rho}\dot{u}(\cdot,t)\|_{L^2}^2+\|H(\cdot,t)\|_{H^2}\le Ce^{-\eta_0t},
\end{align}
while Chen--Huang--Shi \cite{CHS23} proved that \eqref{z8} also holds for 3D exterior domains. Apart from small perturbations and small initial energy, there is another category concerning the long-time behavior of solutions which can be simply summarized as {\it boundedness implies time decay}. For example, Chen--Huang--Xu \cite{CHX19} demonstrated the global stability and the time-decay rates of large solutions in $\mathbb{R}^3$ provided the density and the magnetic field are bounded from above uniformly in time in the H{\"o}lder space $C^\alpha$ with $\alpha$ being sufficiently small and in $L^\infty$ respectively. Zhu and Zi \cite{ZZ21} proved that all derivatives of the smooth solution in $\mathbb{T}^3$ decay exponentially fast to the equilibrium when the density is uniformly bounded from up and below and the $L^6$ norm of the magnetic field is uniformly bounded. However, up to now, the global dynamics of strong solutions to \eqref{a1} with {\it bounded density} in 3D general bounded domains is still unknown. In fact, this is the primary objective of this paper.

Before stating our main result precisely, we describe the notation throughout.
The symbol $\Box$ denotes the end of a proof and $A\triangleq B$ means $A=B$ by definition.
We denote by $\dot{f}\triangleq f_t+u\cdot\nabla f$ the material derivative of $f$ and
\begin{align*}
\int fdx\triangleq\int_\Omega fdx, \quad\bar f\triangleq\frac{1}{|\Omega|}\int_\Omega fdx.
\end{align*}
For $1\le p\le \infty$ and $k\ge 1$, the standard Sobolev spaces are defined as follows
\begin{gather*}
L^p=L^p(\Omega), \ W^{k, p}=W^{k, p}(\Omega), \ H^k=W^{k, 2},\\
\tilde H^1 \triangleq\left\{v\in H^{1}(\Omega)| (v\cdot n)|_{\partial\Omega}=(\curl v\times n)|_{\partial\Omega}=0\right\}.
\end{gather*}
Moreover, we write the initial total energy of \eqref{a1} as
\begin{align}
C_0\triangleq\int\Big(\f12\rho_0|u_0|^2+\f12|H_0|^2+G(\rho_0)\Big)dx,
\end{align}
with
\begin{align}\label{a7}
G(\rho)\triangleq\rho\int_{\bar\rho}^\rho\f{P(s)-P(\bar\rho)}{s^2}ds.
\end{align}
Now we define precisely what we mean by strong solutions to \eqref{a1}--\eqref{a4}.
\begin{defn}[Strong solutions]
For $T>0$, $(\rho, u, H)$ is called a strong solution to the problem \eqref{a1}--\eqref{a4} on $\Omega\times [0, T]$ if for some $q\in (3, 6]$,
\begin{align}\label{a8}
\begin{cases}
0\le \rho\in C([0, T]; W^{1, q}),\ \rho_t\in C([0, T]; L^{q}),\\
(u, H)\in C([0, T]; H^2)\cap L^2(0, T; W^{2, q}),\\
(\sqrt{\rho}u_t, H_t)\in L^\infty(0, T; L^2), \ (u_t, H_t)\in L^2(0, T; H^1),
\end{cases}
\end{align}
and $(\rho, u, H)$ satisfies \eqref{a1} a.e. in $\Omega\times[0, T]$.
\end{defn}

Our main result can be stated as follows.
\begin{theorem}\label{thm1}
Assume that the initial data $(\rho_0\ge 0, u_0, H_0)$ satisfies
\begin{align}
K\triangleq\|\rho_0-\bar\rho_0\|_{L^2}
+\|\sqrt{\rho_0}u_0\|_{L^2}+\|\nabla u_0\|_{H^1}+\|\nabla H_0\|_{H^1}<\infty.
\end{align}
Let $(\rho, u, H)$ be a global strong solution to the problem \eqref{a1}--\eqref{a4} verifying that
\begin{align}\label{a9}
\sup_{t\ge 0}\|\rho(\cdot, t)\|_{L^\infty}\le \hat\rho
\end{align}
for some positive constant $\hat\rho$. Then there exist two positive constants $C_1$ and $\eta_1$,
which are dependent on $\hat\rho$ and $K$, but independent of $t$, such that
\begin{align}\label{1.8}
\|(\rho-\bar\rho_0,\sqrt{\rho}u, H, \nabla u,\nabla H,\sqrt{\rho}\dot{u}, H_t)(\cdot, t)\|_{L^2}\le C_1e^{-\eta_1t}.
\end{align}
If additionally $\inf\limits_{x\in\Omega}\rho_0(x)\ge \rho_*>0$, then
there exist positive constants $C_2$ and $\eta_2$,
which are dependent on $\rho_*$, $\hat\rho$ and $K$, but independent of $t$, such that
\begin{align}\label{b1.11}
\|(\rho-\bar\rho_0, H)\|_{L^\infty}\le C_2e^{-\eta_2t}.
\end{align}
\end{theorem}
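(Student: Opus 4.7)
My plan follows the \emph{boundedness-implies-decay} strategy: using the uniform upper bound \eqref{a9}, I would first prove the basic $L^2$ exponential decay, then lift it to the higher-order quantities in \eqref{1.8}, and finally upgrade to $L^\infty$ decay under the additional lower bound on $\rho_0$.

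First I would derive the standard energy identity by testing $\eqref{a1}_2$ against $u$, $\eqref{a1}_3$ against $H$, and combining with $\eqref{a1}_1$ through the potential $G(\rho)$:
\begin{align*}
\frac{d}{dt}\int\Big(\frac12\rho|u|^2+G(\rho)+\frac12|H|^2\Big)dx + \mu\|\curl u\|_{L^2}^2 + (2\mu+\lambda)\|\divv u\|_{L^2}^2 + \|\curl H\|_{L^2}^2 = 0,
\end{align*}
where the slip conditions \eqref{a4} kill every boundary integral. Mass conservation gives $\bar\rho(t)\equiv\bar\rho_0$, and the upper bound $\rho\le\hat\rho$ makes $G(\rho)$ comparable to $(\rho-\bar\rho_0)^2$ by \eqref{a7}. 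The missing coercive control on $\|\rho-\bar\rho_0\|_{L^2}$ I would supply through the Bogovskii operator: solve $\divv\xi=\rho-\bar\rho_0$ with $\xi|_{\partial\Omega}=0$ so that $\|\nabla\xi\|_{L^2}\le C\|\rho-\bar\rho_0\|_{L^2}$, test $\eqref{a1}_2$ against $\xi$, and absorb the resulting nonlinear terms via Gagliardo--Nirenberg. Combining with the Poincar\'e inequality for $u\cdot n|_{\partial\Omega}=0$, the identity $\bar H=0$ (from $\divv H=0$, $H\cdot n|_{\partial\Omega}=0$, and integration by parts), and the div-curl estimate $\|\nabla H\|_{L^2}\le C\|\curl H\|_{L^2}$, I expect to close a differential inequality $\tfrac{d}{dt}\mathcal{E}_0+\delta\mathcal{E}_0\le 0$, yielding the exponential decay of $\|(\rho-\bar\rho_0,\sqrt\rho u,H)\|_{L^2}$.

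Next, to propagate decay to the higher-order quantities, I would test $\eqref{a1}_2$ against $\dot u$ and $\partial_t\eqref{a1}_3$ against $H_t$. Elliptic $W^{2,q}$ regularity for the Lam\'e system under slip BC, and for the Hodge Laplacian under $H\cdot n=\curl H\times n=0$, controls $\|u\|_{H^2}$ and $\|H\|_{H^2}$ by $\|\sqrt\rho\dot u\|_{L^2}$, $\|H_t\|_{L^2}$, and quantities already known to decay. The resulting estimate reads, schematically,
\begin{align*}
\frac{d}{dt}\mathcal{E}_1 + c\big(\|\sqrt\rho\dot u\|_{L^2}^2+\|\nabla H_t\|_{L^2}^2\big) \le C\mathcal{E}_0^{1/2}\mathcal{E}_1 + C\mathcal{E}_1^{3/2},
\end{align*}
with $\mathcal{E}_1\sim\|\nabla u\|_{L^2}^2+\|\nabla H\|_{L^2}^2+\|\sqrt\rho\dot u\|_{L^2}^2+\|H_t\|_{L^2}^2$. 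The uniform bound on $\rho$ is crucial for absorbing cubic terms such as $\int\rho|u|^2|\nabla u|^2\,dx$ without smallness, and the exponential decay of $\mathcal{E}_0$ from the first step converts this via Gr\"onwall into exponential decay of $\mathcal{E}_1$, completing \eqref{1.8}.

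Finally, for \eqref{b1.11}, I would first establish that $\rho$ stays uniformly bounded below. Integrating $\tfrac{d}{dt}\log\rho=-\divv u$ along particle trajectories reduces this to verifying $\int_0^\infty\|\divv u\|_{L^\infty}\,dt<\infty$; via the Sobolev embedding $W^{1,q}\hookrightarrow L^\infty$ for $q>3$ and the elliptic regularity from the previous step, $\|\divv u\|_{L^\infty}$ is dominated by a polynomial of $\mathcal{E}_1$, which decays exponentially. The transport equation $(\rho-\bar\rho_0)_t+u\cdot\nabla(\rho-\bar\rho_0)=-\rho\divv u$ then yields exponential $L^\infty$ decay of $\rho-\bar\rho_0$, and $H^2\hookrightarrow L^\infty$ together with the $H^2$-decay of $H$ (obtained by one further differentiation of $\eqref{a1}_3$ using the slip conditions) gives the $L^\infty$ bound for $H$. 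The main obstacle is closing the higher-order step above without the auxiliary uniform $L^6$-bound on $H$ or the restriction $2\mu>\lambda$ imposed in \cite{ZZ21}: I would bypass the former by interpolating $\|H\|_{L^6}$ from the $L^2$-norms already shown to decay, and the latter by keeping the intrinsic slip-boundary dissipation $\mu\|\curl u\|_{L^2}^2+(2\mu+\lambda)\|\divv u\|_{L^2}^2$ as a single coercive quantity rather than splitting the Lam\'e operator.
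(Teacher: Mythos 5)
Your overall architecture (basic $L^2$ decay via energy identity plus Bogovskii, then higher-order decay, then $L^\infty$ via a lower bound on $\rho$) is the same as the paper's, but two of your steps have genuine gaps.

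First, the higher-order step does not close by Gr\"onwall as you describe. Your schematic inequality
$\frac{d}{dt}\mathcal{E}_1+c\big(\|\sqrt\rho\dot u\|_{L^2}^2+\|\nabla H_t\|_{L^2}^2\big)\le C\mathcal{E}_0^{1/2}\mathcal{E}_1+C\mathcal{E}_1^{3/2}$
contains the superlinear term $C\mathcal{E}_1^{3/2}=C\mathcal{E}_1^{1/2}\cdot\mathcal{E}_1$, which can only be absorbed into the dissipation if $\mathcal{E}_1$ itself is already small; the decay of $\mathcal{E}_0$ does not help here, and no uniform-in-time bound on $\mathcal{E}_1$ is available a priori (the hypothesis is only \eqref{a9}). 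The paper resolves this by exploiting the time-integrability $\int_0^\infty\big(\|\nabla u\|_{L^2}^2+\|\nabla H\|_{L^2}^2\big)dt<\infty$ from the basic energy to find a large time $T_1$ at which the gradient functional is below a threshold $\delta_1$, and then a continuity (minimal-time contradiction) argument to propagate this smallness for all $t\ge T_1$; only then does the Lyapunov inequality close. Moreover, the paper deliberately decouples the levels: it first closes the functional containing $\|\nabla u\|_{L^2}^2,\|\nabla H\|_{L^2}^2,\|H\|_{L^4}^4$ (whose dissipation is $\|\sqrt\rho\dot u\|_{L^2}^2+\|\curl^2H\|_{L^2}^2$), and only afterwards estimates $\|\sqrt\rho\dot u\|_{L^2}^2+\|H_t\|_{L^2}^2$; lumping the latter into $\mathcal{E}_1$ from the start, as you do, destroys the integrability input needed to locate the small time $T_1$. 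You also do not address the boundary terms $\int_{\partial\Omega}F_t(\dot u\cdot n)dS$ arising when testing against $\dot u$ in a general bounded domain, which the paper handles via the identity $u\cdot\nabla u\cdot n=-u\cdot\nabla n\cdot u$ on $\partial\Omega$ and trace estimates on $F$; this is one of the main new difficulties compared with the torus case.

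Second, your route to the lower bound of the density fails. You reduce it to $\int_0^\infty\|\divv u\|_{L^\infty}dt<\infty$, but $\divv u=\frac{1}{2\mu+\lambda}\big(F+(P-\bar P)+\frac12|H|^2\big)$, and $\|P-\bar P\|_{L^\infty}$ is only known to be bounded by $C(\hat\rho)$ --- its $L^\infty$ decay is precisely what the theorem is trying to prove, so assuming its time-integrability is circular. Nor can you get $\|\nabla^2u\|_{L^q}$ decay from elliptic regularity, since that would require control of $\|\nabla P\|_{L^q}$, i.e.\ of $\|\nabla\rho\|_{L^q}$, which is not uniformly bounded in time. The paper instead isolates the good sign of the pressure term: along particle paths the density satisfies
\begin{align*}
\frac{d}{dt}(\rho-\bar\rho_0)+\frac{1}{2\mu+\lambda}\rho\big(\rho^\gamma-\bar\rho_0^\gamma\big)
=-\frac{1}{2\mu+\lambda}\rho\Big(F+\frac12|H|^2\Big),
\end{align*}
where the left-hand side has a damping structure and only $F+\frac12|H|^2$ (not $P-\bar P$) needs to be small in $L^1_t L^\infty_x$ on $[T_1,\infty)$; a contradiction argument then excludes the density dropping below a fixed threshold, and the same damped ODE yields the exponential $L^\infty$ decay of $\rho-\bar\rho_0$. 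This use of the effective viscous flux in the transport equation is the essential idea missing from your third step.
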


\begin{remark}
It should be pointed out that global strong solutions to the problem \eqref{a1}--\eqref{a4} with vacuum have been proven in \cite{CHPS23}.
One of the key ingredients in \cite{CHPS23} is to derive a time-independent
upper bound of the density. Hence, via similar arguments as those in \cite{CHPS23}, one can show that a strong solution $(\rho,u,H)$ satisfying \eqref{a9} indeed exists provided that the initial energy is suitably small.
\end{remark}

\begin{remark}
Compared with \cite{ZZ21}, there are several innovations. First of all,
we deal with the general bounded domains rather than the torus $\mathbb{T}^3$.
Secondly, we remove the technical restriction $2\mu>\lambda$. Thirdly, there is no need to impose the $L^6$-norm of the magnetic field to be uniformly bounded. Fourthly, we do not require the assumption of zero initial total momentum. Last but not least, the density is assumed to be bounded from below initially in order to derive the exponential decay rate \eqref{b1.11}.
\end{remark}

\begin{remark}
The stability result of the density in \eqref{b1.11} can be regarded as a supplement of \eqref{z7}.
\end{remark}

As a direct application of Theorem \ref{thm1}, we have the following result (cf. \cite[Theorem 1.2]{LWZ25}), which shows that the vacuum state will persist for any time as long as the initial
density contains vacuum.
\begin{corollary}\label{thm2}
Assume that all conditions of Theorem \ref{thm1} are satisfied. If additionally $\inf_{x\in\Omega}\rho_0(x)=0$, then it holds
that, for any $t\ge 0$,
\begin{align*}
\inf_{x\in\Omega}\rho(x, t)=0.
\end{align*}
\end{corollary}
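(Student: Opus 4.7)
My plan is to bypass Theorem \ref{thm1} entirely and rely only on the Lagrangian structure of the continuity equation $\eqref{a1}_1$, combined with the strong-solution regularity \eqref{a8}. Rewriting $\eqref{a1}_1$ as $\rho_t+u\cdot\nabla\rho=-\rho\divv u$ and integrating along characteristics $X(\cdot;y)$ of $u$, defined by
\begin{equation*}
\tfrac{d}{ds}X(s;y)=u(X(s;y),s),\qquad X(0;y)=y,
\end{equation*}
yields the representation formula
\begin{equation*}
\rho(X(t;y),t)=\rho_0(y)\exp\!\left(-\int_0^t\divv u(X(s;y),s)\,ds\right).
\end{equation*}
From this formula, vanishing of $\rho_0$ is propagated forward in time along characteristics, provided the exponential factor is uniformly bounded in $y$.

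Next I would justify the well-posedness of the flow. Since $(\rho,u,H)$ is a strong solution, for every fixed $T>0$ we have $u\in C([0,T];H^2)\cap L^2(0,T;W^{2,q})$ with $q>3$; Sobolev embedding then gives $\nabla u\in L^2(0,T;L^\infty(\Omega))$. Consequently $u$ is spatially Lipschitz with an $L^2$-in-time Lipschitz constant, so Cauchy--Lipschitz produces a unique solution $X(\cdot;y)\in C([0,T];\bar\Omega)$, and the no-penetration condition $u\cdot n=0$ on $\partial\Omega$ guarantees $X(t;y)\in\bar\Omega$ for every $y\in\bar\Omega$. Cauchy--Schwarz in time then controls
\begin{equation*}
M(t)\triangleq\int_0^t\|\divv u(\cdot,s)\|_{L^\infty}\,ds\le t^{1/2}\|\divv u\|_{L^2(0,t;L^\infty)}<\infty,
\end{equation*}
so that the exponential factor in the representation formula lies in $[e^{-M(t)},e^{M(t)}]$ \emph{uniformly} in $y$.

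The conclusion follows by a minimizing-sequence argument. Fix any $t\ge 0$ (the case $t=0$ is immediate). By assumption there exist $y_n\in\Omega$ with $\rho_0(y_n)\to 0$. Setting $x_n\triangleq X(t;y_n)\in\bar\Omega$, the Lagrangian formula yields
\begin{equation*}
0\le\rho(x_n,t)\le e^{M(t)}\rho_0(y_n)\longrightarrow 0,
\end{equation*}
which forces $\inf_{x\in\Omega}\rho(x,t)=0$, as claimed.

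There is no real analytic obstacle here; the only delicate point is verifying that the regularity encoded in \eqref{a8} is precisely what is needed to (i) generate a well-defined flow on $\bar\Omega$ and (ii) guarantee $\divv u\in L^1(0,t;L^\infty)$. Everything else is a direct exploitation of the fact that the transport-type structure of $\eqref{a1}_1$ propagates the vanishing locus of the density, regardless of the finer decay estimates \eqref{1.8}--\eqref{b1.11} established in Theorem \ref{thm1}.
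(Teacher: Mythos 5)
Your proof is correct, and its core --- the Lagrangian representation $\rho(X(t;y),t)=\rho_0(y)\exp\bigl(-\int_0^t\divv u\,ds\bigr)$ propagating the vanishing of the density along particle paths --- is exactly the mechanism behind the paper's corollary (which is only cited to \cite{LWZ25} and framed as ``a direct application of Theorem \ref{thm1}''). Where you genuinely differ is in how the exponential factor is controlled: the paper's route goes through the effective viscous flux, writing $\divv u=\frac{1}{2\mu+\lambda}\bigl(F+(P-\bar P)+\frac12|H|^2\bigr)$ and invoking the decay machinery of Section \ref{sec3} (in particular the analogue of \eqref{3.51}) to bound $\int_0^t\|\divv u\|_{L^\infty}\,ds$, whereas you extract $\divv u\in L^2(0,t;L^\infty)$ directly from the strong-solution regularity $u\in L^2(0,T;W^{2,q})$, $q>3$, in \eqref{a8}. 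Your version is more elementary and more general --- it needs neither the uniform density bound \eqref{a9} nor any conclusion of Theorem \ref{thm1}, only that $(\rho,u,H)$ is a global strong solution --- at the cost of a time-dependent constant $e^{M(t)}$, which is harmless since the claim is pointwise in $t$. Two small remarks: since $\rho_0\in W^{1,q}\hookrightarrow C(\bar\Omega)$ with $\bar\Omega$ compact, the infimum is actually attained at some $y_0\in\bar\Omega$, so the minimizing-sequence step can be replaced by the single trajectory through $y_0$; and because $x_n=X(t;y_n)$ a priori lands in $\bar\Omega$ rather than $\Omega$, you should either note that the boundary is invariant under the flow (so interior points stay interior) or simply use the continuity of $\rho(\cdot,t)$ on $\bar\Omega$ to identify $\inf_{\Omega}\rho(\cdot,t)$ with $\inf_{\bar\Omega}\rho(\cdot,t)$. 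Neither point affects the validity of the argument.
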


Now we sketch the main ideas used in the proof of Theorem \ref{thm1}. First of all, we prove the exponential stability of $\|(\rho-\bar\rho_0, \sqrt\rho u, H)\|_{L^2}$. On one hand, employing the basic energy estimate and the Bogovskii operator (see Lemma \ref{l22}) to the problem \eqref{a1}--\eqref{a4}, one can deduce an energy-dissipation inequality
\begin{align}\label{z1}
\f{d}{dt}\widetilde{\E}(t)+\widetilde{\D}(t)\le 0,
\end{align}
where $\widetilde{\E}(t)$ and $\widetilde{\D}(t)$ are equivalent to $\|(\rho-\bar\rho_0, \sqrt\rho u, H)\|_{L^2}^2$ and $\|(\rho-\bar\rho_0, \na u, \na H)\|_{L^2}$, respectively. On the other hand, by making use of the $L^p$-estimate of the div-curl system (see \eqref{f6}), Poincar\'e's inequality (see \eqref{z2.5}), and \eqref{a9}, one has that
\begin{align*}
\|\na u\|_{L^p}\le C\big(\|\divv u\|_{L^p}+\|\curl u\|_{L^p}\big),\ \
\|\sqrt\rho u\|_{L^2}\le C\|\nabla u\|_{L^2},\ \
\|H\|_{L^2}\le C\|\na H\|_{L^2},
\end{align*}
which implies that $\widetilde{\D}(t)\ge C\widetilde{\E}(t)$. This combined with \eqref{z1} yields the desired exponential decay of $\|(\rho-\bar\rho_0, \sqrt\rho u, H)\|_{L^2}$.
Next, we show the exponential stability of $\|(\na u,\na H)(\cdot,t)\|_{L^2}$. To this end, taking advantage of good properties of the {\it effective viscous flux} $F$ and the {\it vorticity} $\curl u$ as well as delicate energy estimates, we can derive an important inequality \eqref{w30}. Note that $\|(\na u, \na H)(\cdot, t)\|_{L^2}$ is sufficiently small for large enough $t$ (see \eqref{b36}), this together with \eqref{w15} gives a crucial Lyapunov-type energy inequality \eqref{b37}, which combined with Gronwall's inequality indicates the desired exponential decay of $\|(\nabla u, \na H)(\cdot,t)\|_{L^2}$.
Furthermore, in order to obtain the exponential stability of $\|(\sqrt{\rho}\dot{u}, H_t)(\cdot,t)\|_{L^2}$, we need to improve the uniform-in-time bounds of $\|(\sqrt{\rho}\dot{u}, H_t)(\cdot,t)\|_{L^2}$. In this process, we have to deal with some boundary integrals, such as $\int_{\pa\Omega}F_t(\dot u\cdot n)dS$ and so on. To overcome this difficulty, inspired by \cite{CL23}, it follows from $(u\cdot n)|_{\partial\Omega}=0$ that
\begin{align}\label{a15}
u\cdot\na u\cdot n=-u\cdot\na n\cdot u,\quad \text{on}~\partial\Omega,
\end{align}
which indicates that
\begin{align*}
(\dot u+(u\cdot\na n)\times u^\bot)\cdot n=0, \quad \text{on}~\partial\Omega,
\end{align*}
with $u^\bot\triangleq-u\times n$, and the key point is to control $\int_{\pa\Om}(u\cdot\na n\cdot u)FdS$ (see \eqref{w43}).
Combining these key facts with lower-order energy estimates, we obtain a Lyapunov-type energy inequality \eqref{w58}. This will lead to the exponential decay of $\|(\sqrt{\rho}\dot{u}, H_t)(\cdot,t)\|_{L^2}$ immediately.
Finally, we establish that $\|\rho-\bar\rho_0\|_{L^\infty}$ decays exponentially. The key ingredient is to get a time-independent positive lower bound of the density $\rho$. This is the case by modifying the methods used in \cite{WZ24,LWZ25}. Then we can prove the desired by the damping mechanism of density (see Lemma \ref{l34}).

The rest of this paper is organized as follows. Some important inequalities and auxiliary lemmas will be given in Section \ref{sec2}, while Section \ref{sec3} is devoted to proving Theorem \ref{thm1}.

\section{Preliminaries}\label{sec2}

In this section we recall some known facts and elementary inequalities which will be used later. We start with the following Gagliardo--Nirenberg inequality (cf. \cite[Lemma 2.3]{LWZ25}).
\begin{lemma}\label{l21}
(Gagliardo--Nirenberg inequality, special case).
Assume that $\Omega$ is a bounded Lipschitz domain in $\mathbb{R}^3$. For $p\in [2, 6]$, $q\in (1, \infty)$, and $r\in (3, \infty)$, there exist generic constants $C_i>0\ (i\in\{1,2,3,4\})$ which may depend only on $p$, $q$, $r$, and $\Omega$ such that, for any $f\in H^1$ and $g\in L^q\cap D^{1, r}$,
\begin{gather}
\|f\|_{L^p}\le C_1\|f\|_{L^2}^\f{6-p}{2p}\|\na f\|_{L^2}^\f{3p-6}{2p}+C_2\|f\|_{L^2},\label{f1}\\
\|g\|_{L^\infty}\le C_3\|g\|_{L^q}^{\frac{q(r-3)}{3r+q(r-3)}}\|\na g\|_{L^r}^{\frac{3r}{3r+q(r-3)}}+C_4\|g\|_{L^2}.\label{f2}
\end{gather}
Moreover, if $\int_{\Omega}f(x)dx=0$ or $(f\cdot n)|_{\partial\Omega}=0$ or $(f\times n)|_{\partial\Omega}=0$, we can choose $C_2=0$. Similarly, the constant $C_4=0$ provided $\int_{\Omega}g(x)dx=0$ or $(g\cdot n)|_{\partial\Omega}=0$ or $(g\times n)|_{\partial\Omega}=0$.
\end{lemma}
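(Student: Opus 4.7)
The plan is to prove \eqref{f1} and \eqref{f2} in the generic (nonzero $C_2,C_4$) form by combining standard Sobolev embeddings with H\"older interpolation, and then to deduce the sharp versions with $C_2=0$ or $C_4=0$ from an appropriate Poincar\'e-type inequality under each of the three listed auxiliary hypotheses. For \eqref{f1}, I would start from the Sobolev embedding $H^{1}(\Omega)\hookrightarrow L^{6}(\Omega)$, valid on any bounded Lipschitz $\Omega\subset\mathbb{R}^{3}$, which gives $\|f\|_{L^{6}}\le C(\|f\|_{L^{2}}+\|\na f\|_{L^{2}})$. Since $p\in[2,6]$ admits the decomposition $1/p=\alpha/2+(1-\alpha)/6$ with $\alpha=(6-p)/(2p)$ (hence $1-\alpha=(3p-6)/(2p)$), H\"older interpolation yields $\|f\|_{L^{p}}\le\|f\|_{L^{2}}^{\alpha}\|f\|_{L^{6}}^{1-\alpha}$; substituting the Sobolev bound and using $(a+b)^{1-\alpha}\le a^{1-\alpha}+b^{1-\alpha}$ for $1-\alpha\in[0,1]$ produces \eqref{f1} with exactly the stated exponents.

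For \eqref{f2}, I would first extend $g$ to $\mathbb{R}^{3}$ via a Stein-type bounded extension $E:W^{1,s}(\Omega)\to W^{1,s}(\mathbb{R}^{3})$, uniformly in $s\in\{q,r\}$, and apply the whole-space Gagliardo--Nirenberg inequality to $Eg$. Since $r>3$, the Morrey embedding $D^{1,r}(\mathbb{R}^{3})\hookrightarrow C^{0}(\mathbb{R}^{3})$ makes $L^{\infty}$ a valid endpoint, and the scaling identity $\theta/q=(1-\theta)(r-3)/(3r)$ fixes $\theta=q(r-3)/[3r+q(r-3)]$, hence $1-\theta=3r/[3r+q(r-3)]$. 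Undoing the extension via boundedness of $E$ and absorbing the incidental lower-order $\|g\|_{L^{r}}$ factor by one further H\"older interpolation between $L^{q}$ and $L^{\infty}$, followed by Young's inequality, then recovers \eqref{f2} in the stated form.

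To sharpen to $C_2=0$ (the argument for $C_4=0$ is identical), I would in each listed case prove a Poincar\'e-type bound $\|f\|_{L^{2}}\le C\|\na f\|_{L^{2}}$ and fold $C_2\|f\|_{L^{2}}$ into the multiplicative term via Young's inequality. The zero-mean case is the classical Poincar\'e--Wirtinger inequality. For $(f\cdot n)|_{\partial\Omega}=0$ or $(f\times n)|_{\partial\Omega}=0$ I would run a Rellich compactness/contradiction argument: a violating sequence, normalized to unit $L^{2}$-norm with gradient tending to zero, would produce a weak $H^{1}$-limit that is a constant vector field with the prescribed vanishing trace; since the outward normal $n$ to $\partial\Omega$ spans $\mathbb{R}^{3}$ as the base point varies, the only such constant is zero, contradicting strong $L^{2}$-convergence to a unit-norm limit. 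I expect this last step to be the main obstacle, because it is the only nonanalytic input and rests on a geometric property of $\partial\Omega$ rather than on pure interpolation machinery.
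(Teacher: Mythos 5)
The paper does not actually prove this lemma: it is imported verbatim as a known fact, citing \cite[Lemma 2.3]{LWZ25} for the interpolation inequalities and \cite[Lemma 8]{BS2012} (restated here as Lemma \ref{l23}) for the Poincar\'e-type ingredient that kills $C_2$ and $C_4$. So there is no in-paper argument to compare against; what you have written is the standard proof, and it is essentially sound. Estimate \eqref{f1} via $H^1\hookrightarrow L^6$ plus H\"older interpolation is correct with exactly the stated exponents, and the reduction of the $C_2=0$ case to $\|f\|_{L^2}\le C\|\nabla f\|_{L^2}$ is immediate (no Young's inequality is even needed: just write $\|f\|_{L^2}=\|f\|_{L^2}^{\frac{6-p}{2p}}\|f\|_{L^2}^{\frac{3p-6}{2p}}$ and bound the second factor). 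Your compactness argument for the vector-valued Poincar\'e inequality, resting on the fact that no nonzero constant field can satisfy $c\cdot n=0$ or $c\times n=0$ on all of $\partial\Omega$ because the normals of a bounded Lipschitz boundary span $\mathbb{R}^3$, is precisely the content of \cite[Lemma 8]{BS2012}. The one place where your sketch is thinner than it should be is the claim that the $C_4=0$ case is ``identical'': the additive term $C_4\|g\|_{L^2}$ is not absorbed by a one-line substitution, because the exponents $\frac{q(r-3)}{3r+q(r-3)}$ and $\frac{3r}{3r+q(r-3)}$ do not match those produced by naively replacing $\|g\|_{L^2}$ by $\|\nabla g\|_{L^2}$. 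One workable route is to note that under any of the three hypotheses Morrey's inequality plus Poincar\'e give $\|g\|_{L^\infty}\le C\|\nabla g\|_{L^r}$ (since $r>3$), then interpolate $\|g\|_{L^2}\le\|g\|_{L^q}^{1-\theta}\|g\|_{L^m}^{\theta}$ with $m$ chosen so that $\frac12=\frac{1-\theta}{q}+\frac{\theta}{m}$ and $\theta=\frac{3r}{3r+q(r-3)}$ (one checks $m\ge2$ always holds), and bound $\|g\|_{L^m}\le C\|\nabla g\|_{L^r}$. With that repair, and the routine bootstrapping you already indicate for the lower-order terms created by the extension operator, the proposal is a complete and correct proof of the cited lemma.
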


Next, we introduce the Bogovskii operator in a bounded domain, which plays an important role in controlling $\|P\|_{L^2}^2$. One has the following conclusion (cf. \cite[Lemma 3.17]{NS04}).
\begin{lemma}\label{l22}
Let $\Omega\subset\mathbb{R}^3$ be a bounded Lipschitz domain. Then, for any $p\in(1,\infty)$, there exists a linear operator $\mathcal{B}=\left[\mathcal{B}_{1}, \mathcal{B}_{2}, \mathcal{B}_{3}\right]: L^{p}(\Omega) \rightarrow \big(W_{0}^{1, p}(\Omega)\big)^{3}$ such that
\begin{align*}
\begin{cases}
\operatorname{div} \mathcal{B}[f]=f, & x \in \Omega, \\
\mathcal{B}[f]=0, & x \in \partial \Omega,
\end{cases}
\end{align*}
and
\begin{align*}
\|\nabla\mathcal{B}[f]\|_{L^{p}} \leq C(p,\Omega)\|f\|_{L^{p}}.
\end{align*}
Moreover, if $f=\operatorname{div} g$ with $g\in L^p(\Omega)$ satisfying $(g \cdot n)|_{\partial \Omega}=0$, it holds that
\begin{align*}
\|\mathcal{B}[f]\|_{L^{p}} \leq C(p,\Omega)\|g\|_{L^{p}}.
\end{align*}
\end{lemma}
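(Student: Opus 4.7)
My plan is to follow the classical Bogovskii construction: build $\mathcal{B}$ first on Lipschitz domains that are star-shaped with respect to an interior ball by means of an explicit integral formula, then extend to general bounded Lipschitz $\Omega$ via a partition of unity. A preliminary reduction is the compatibility condition: since $\mathcal{B}[f]\in (W^{1,p}_0(\Omega))^3$ forces $\int_\Omega\divv\mathcal{B}[f]\,dx=0$ by the divergence theorem, the identity $\divv\mathcal{B}[f]=f$ forces $\bar f=0$. I therefore construct $\mathcal{B}$ on the closed subspace $\{f\in L^p(\Omega):\bar f=0\}$ (this being the formulation actually used in the paper's applications, e.g.\ to $P-\bar P$).

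\textbf{Star-shaped case.} Assume $\Omega$ is Lipschitz and star-shaped with respect to a ball $B$, fix $\omega\in C_c^\infty(B)$ with $\int_B\omega\,dy=1$, and for mean-zero $f\in L^p(\Omega)$ define the Bogovskii kernel
\begin{equation*}
\mathcal{B}[f](x)\triangleq\int_\Omega f(y)\,(x-y)\int_1^\infty\omega\!\left(y+r(x-y)\right)r^2\,dr\,dy.
\end{equation*}
A direct calculation (change of variables $z=y+r(x-y)$ in the inner integral) gives $\divv\mathcal{B}[f]=f-\omega\int_\Omega f=f$, and the support condition on $\omega$ forces $\mathcal{B}[f](x)=0$ for $x\in\partial\Omega$, because the ray from $y\in\Omega$ through such an $x$ cannot reach $B$ at any $r>1$. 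Differentiating under the integral sign, $\partial_i\mathcal{B}^j[f](x)$ splits into a principal-value singular integral whose kernel is homogeneous of degree $-3$ in $x-y$ and satisfies the standard Calder\'on--Zygmund size/smoothness/cancellation conditions on spheres, plus a remainder bounded pointwise by $C|x-y|^{-2}$. Classical $L^p$ boundedness of Calder\'on--Zygmund singular integrals together with Young's inequality applied to the weakly singular remainder then yields $\|\nabla\mathcal{B}[f]\|_{L^p}\le C(p,\Omega)\|f\|_{L^p}$.

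\textbf{The $g$-bound.} For $f=\divv g$ with $g\in L^p(\Omega)$ and $(g\cdot n)|_{\partial\Omega}=0$, I substitute into the explicit representation and integrate by parts in $y$, moving the divergence off $g$ and onto the kernel. The boundary integral vanishes thanks to $g\cdot n=0$, and a scaling check shows the resulting kernel in three dimensions is only weakly singular, bounded by $C|x-y|^{-2}$. Such an operator is bounded from $L^p(\Omega)$ into itself by the Hardy--Littlewood--Sobolev inequality (or a direct Schur-test), which gives $\|\mathcal{B}[\divv g]\|_{L^p}\le C(p,\Omega)\|g\|_{L^p}$.

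\textbf{Globalization and main obstacle.} To pass to a general bounded Lipschitz $\Omega$, I cover $\overline{\Omega}$ by finitely many open sets whose intersections with $\Omega$ are Lipschitz and star-shaped with respect to a ball, fix a subordinate partition of unity $\{\phi_k\}$, and write $f=\sum_k\phi_k f$. Each piece $\phi_k f$ will generally fail the mean-zero condition, so I correct cascade-style by iteratively transferring the excess mass of $\phi_k f$ to the next patch through an auxiliary function supported in the overlap; the chain closes because $\int f=0$. Applying the star-shaped operator patch by patch and summing produces $\mathcal{B}$ on $\Omega$ inheriting both bounds. The main technical obstacle is the Calder\'on--Zygmund step: one must verify the kernel estimates for the singular part of $\nabla\mathcal{B}[f]$ on a domain with merely Lipschitz boundary, and track how the constant depends only on $p$ and the ratio $\mathrm{diam}(\Omega)/\mathrm{rad}(B)$, so that the partition-of-unity bookkeeping does not degrade the final bound.
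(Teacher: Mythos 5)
The paper does not prove this lemma at all; it is quoted directly from Novotn\'y--Stra\v{s}kraba \cite{NS04}, Lemma 3.17. Your outline is precisely the standard Bogovskii construction underlying that reference (explicit integral kernel on domains star-shaped with respect to a ball, Calder\'on--Zygmund theory for the gradient bound, integration by parts against the kernel for the $g$-estimate, and a partition of unity with mass transfer for general Lipschitz domains), so it is correct in approach and matches the cited source; your preliminary observation that $\divv\mathcal{B}[f]=f$ together with $\mathcal{B}[f]\in \big(W_0^{1,p}(\Omega)\big)^3$ forces $\bar f=0$ is also accurate --- the paper's statement tacitly assumes this, and indeed the operator is only ever applied to mean-zero data such as $\rho-\bar\rho_0$.
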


Since $(u, H)$ does not vanish on the boundary,
we need to use the following generalized Poincar{\'e} inequality (cf. \cite[Lemma 8]{BS2012}).
\begin{lemma}\label{l23}
Let $\Omega\subset\mathbb{R}^3$ be a bounded Lipschitz domain. Then, for $1<p<\infty$, there exists a positive constant $C$ depending only on $p$ and $\Omega$ such that
\begin{align}\label{z2.5}
\|f\|_{L^p} \leq  C\|\nabla f\|_{L^p},
\end{align}
for each vector field $f\in W^{1,p}(\Omega)$ satisfying either $(f\cdot n)|_{\partial\Omega}=0$ or $(f\times n)|_{\partial\Omega}=0$.
\end{lemma}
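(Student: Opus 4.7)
The plan is to prove the inequality by the standard Rellich compactness plus contradiction scheme. Suppose, for some fixed $p\in(1,\infty)$ and one of the two boundary conditions, that no such constant $C$ exists. Then one can extract a sequence $\{f_k\}\subset W^{1,p}(\Omega)^3$ satisfying the prescribed boundary condition with $\|f_k\|_{L^p}=1$ and $\|\nabla f_k\|_{L^p}\to 0$. Since $\Omega$ is bounded Lipschitz, $\{f_k\}$ is bounded in $W^{1,p}$ and the Rellich--Kondrachov embedding $W^{1,p}(\Omega)\hookrightarrow L^p(\Omega)$ is compact; along a subsequence $f_k\to f$ strongly in $L^p$ and weakly in $W^{1,p}$. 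The strong limit gives $\|f\|_{L^p}=1$, while the weak gradient limit yields $\nabla f\equiv 0$, so $f$ equals a constant vector $c\in\mathbb{R}^3$ on each connected component $\Omega_j$ of $\Omega$ (a bounded Lipschitz domain has only finitely many). Moreover the trace operator $W^{1,p}(\Omega)\to L^p(\partial\Omega)^3$ is bounded linear, so the prescribed boundary condition passes to the weak $W^{1,p}$-limit.

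The remaining task is to show that such a constant $c$ must vanish on every component, contradicting $\|f\|_{L^p}=1$. If $(c\cdot n)|_{\partial\Omega_j}=0$, apply the divergence theorem on $\Omega_j$ to the smooth vector field $(c\cdot x)\,c$:
\[
|c|^2|\Omega_j|
=\int_{\Omega_j}\operatorname{div}\bigl((c\cdot x)c\bigr)\,dx
=\int_{\partial\Omega_j}(c\cdot x)(c\cdot n)\,dS=0,
\]
forcing $c=0$. If instead $(c\times n)|_{\partial\Omega_j}=0$, set $\phi(x)=c\cdot x$; then $\nabla\phi\times n=c\times n=0$ on $\partial\Omega_j$ says that the tangential gradient of $\phi$ vanishes on $\partial\Omega_j$, so $\phi$ is constant on each of the finitely many connected components of $\partial\Omega_j$. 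Consequently $\partial\Omega_j$ is contained in a finite union of parallel planes $P_{t_1},\ldots,P_{t_N}$ perpendicular to $c$. For any $t\in(\min_k t_k,\max_k t_k)\setminus\{t_1,\ldots,t_N\}$ the set $P_t\cap\Omega_j$ is nonempty (since $c\cdot x$ is continuous and achieves its extrema on $\partial\Omega_j$), open in the plane $P_t$, and also closed in $P_t$ (because $P_t\cap\partial\Omega_j=\emptyset$), so by connectedness of $P_t$ it equals all of $P_t$, contradicting the boundedness of $\Omega_j$. Hence $c=0$ in both cases.

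The main obstacle is the second boundary condition. Unlike $(f\cdot n)=0$, the relation $(f\times n)=0$ does not yield a boundary integral that directly controls $|c|^2$, so one cannot close the argument by a single divergence-theorem identity; instead the trace relation must be combined with the global topological fact that a bounded open set in $\mathbb{R}^3$ cannot have its boundary contained in finitely many parallel planes. Once this geometric step is in place, everything else is the routine compactness reduction, and the resulting $C$ plainly depends only on $p$ and $\Omega$.
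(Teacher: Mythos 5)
The paper does not prove this lemma at all: it is quoted verbatim from Lemma 8 of the cited reference [BS2012] (Berselli--Spirito), so your argument is necessarily an independent, self-contained proof rather than a reproduction of one in the text. Your compactness-and-contradiction scheme is the standard and correct way to obtain such a Poincar\'e inequality, and the two rigidity steps for the limiting constant $c$ are both sound: the identity $|c|^2|\Omega_j|=\int_{\partial\Omega_j}(c\cdot x)(c\cdot n)\,dS$ disposes of the normal case, and the slicing argument disposes of the tangential case. Two small points deserve an explicit line. First, your interval $(\min_k t_k,\max_k t_k)$ is empty when $\phi=c\cdot x$ takes a single value on all of $\partial\Omega_j$; that case must be excluded separately, which is immediate because $\phi$ would then be constant on $\overline{\Omega_j}$, forcing the open set $\Omega_j$ into a plane. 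Second, for a merely Lipschitz boundary the normal exists only a.e., so ``$\nabla\phi\times n=0$ hence $\phi$ is locally constant on $\partial\Omega_j$'' uses that a Lipschitz function on a connected Lipschitz hypersurface with a.e.\ vanishing tangential gradient is constant; this is true but worth stating (and is trivial in the paper's actual setting, where $\Omega$ is smooth). The passage of the boundary condition to the weak limit is fine since $f\mapsto (f\cdot n)|_{\partial\Omega}$ and $f\mapsto (f\times n)|_{\partial\Omega}$ are bounded linear maps into $L^p(\partial\Omega)$, hence weakly continuous. With those two sentences added, the proof is complete and the constant visibly depends only on $p$ and $\Omega$.
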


Next, we set
\begin{align}\label{f5}
F\triangleq(2\mu+\lambda)\divv u-(P-\bar P)-\f12|H|^2, \quad \omega\triangleq\na\times u,
\end{align}
where $F$ and $\omega$ denotes the {\it effective viscous flux} and the {\it vorticity}, respectively. For $F$, $\omega$, $\na u$, and $\na H$, we have the following key {\it a priori} estimates (cf. \cite[Lemma 2.2]{CHPS23}).
\begin{lemma}\label{l24}
Let $(\rho, u, H)$ be a classical solution of \eqref{a1}--\eqref{a4} on $\Omega\times(0, T]$. Then, for any $p\in[2, 6]$ and $1<q<+\infty$, there exists a positive constant $C$ depending only on $p$, $q$, $\mu$, $\lambda$, and $\Omega$ such that
\begin{align}\label{f6}
\|\nabla u\|_{L^q}&\le C\big(\|\divv u\|_{L^q}+\|\omega\|_{L^q}\big),
\\ \label{f7}
\|\nabla H\|_{L^q}&\le C\|\curl H\|_{L^q},
\\ \label{f8}
\|\nabla F\|_{L^p}&\le C\big(\|\rho\dot u\|_{L^p}+\|H\cdot\nabla H\|_{L^p}\big),\\
\|\nabla\omega\|_{L^p}&\le C\big(\|\rho\dot u\|_{L^p}+\|H\cdot\nabla H\|_{L^p}+\|\nabla u\|_{L^2}\big),\\
\|F\|_{L^p}&\le C\big(\|\rho\dot u\|_{L^2}+\|H\cdot\nabla H\|_{L^2}\big)^{\frac{3p-6}{2p}}\big(\|\nabla u\|_{L^2}+\|P-\bar P\|_{L^2}+\|H\|_{L^4}^2\big)^{\frac{6-p}{2p}}\nonumber\\
&\quad+C\big(\|\nabla u\|_{L^2}+\|P-\bar P\|_{L^2}+\|H\|_{L^4}^2\big),
\\
\|\omega\|_{L^p}&\le C\big(\|\rho\dot u\|_{L^2}+\|H\cdot\nabla H\|_{L^2}\big)^{\frac{3p-6}{2p}}\|\nabla u\|_{L^2}^{\frac{6-p}{2p}}+C\|\nabla u\|_{L^2}.
\end{align}
Moreover, one has that
\begin{align}\label{f12}
\|\nabla u\|_{L^p}&\le C\big(\|\rho\dot u\|_{L^2}+\|H\cdot\nabla H\|_{L^2}\big)^{\frac{3p-6}{2p}}\big(\|\nabla u\|_{L^2}+\|P-\bar P\|_{L^2}+\|H\|_{L^4}^2\big)^{\frac{6-p}{2p}}\nonumber\\
&\quad+C\big(\|\nabla u\|_{L^2}+\|P-\bar P\|_{L^p}+\||H|^2\|_{L^p}\big),\\
\quad\|\na^2H\|_{L^p}&\le C\|\curl H\|_{W^{1, p}}\le C(\|\curl^2H\|_{L^p}+\|\curl H\|_{L^p}).\label{f13}
\end{align}
\end{lemma}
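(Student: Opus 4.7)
The plan is to combine an algebraic rewriting of the momentum equation with standard div-curl estimates, elliptic regularity for Poisson-type equations, and Gagliardo--Nirenberg interpolation. Using $\mu\Delta u+(\mu+\lambda)\nabla\divv u=(2\mu+\lambda)\nabla\divv u-\mu\nabla\times\omega$ together with $(\nabla\times H)\times H=H\cdot\nabla H-\tfrac12\nabla|H|^2$, I would first recast the momentum equation in \eqref{a1} as the cornerstone identity
\begin{align*}
\nabla F-\mu\,\nabla\times\omega=\rho\dot u-H\cdot\nabla H,
\end{align*}
with $F$ and $\omega$ as defined in \eqref{f5}. Taking divergence (resp.\ curl, using $\divv\omega=0$) then yields the elliptic equations $\Delta F=\divv(\rho\dot u-H\cdot\nabla H)$ and $\mu\Delta\omega=-\nabla\times(\rho\dot u-H\cdot\nabla H)$, which will drive all subsequent bounds.

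Next, I would derive the gradient estimates \eqref{f6}--\eqref{f7} as instances of the standard div-curl (Gaffney-type) inequality for vector fields satisfying a tangential boundary condition $v\cdot n=0$ or $v\times n=0$; for $H$ the additional fact $\divv H=0$ absorbs the divergence contribution. To obtain \eqref{f8}, I would read the cornerstone identity as a Helmholtz splitting of $\rho\dot u-H\cdot\nabla H$: its curl-free part is $\nabla F$ and its divergence-free part is $-\mu\nabla\times\omega$. Equivalently, I would apply $L^p$ elliptic regularity to the Neumann problem solved by $F$, after computing the boundary datum $\partial_n F$ from the cornerstone identity dotted with $n$. The estimate on $\nabla\omega$ then follows from $L^p$ elliptic regularity for $\omega$ with boundary condition $\omega\times n=0$; the auxiliary term $\|\nabla u\|_{L^2}$ enters to handle kernel components when $\omega$ is not itself a gradient.

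The $L^p$ bounds on $F$ and $\omega$ themselves come from Gagliardo--Nirenberg \eqref{f1}: starting from $\|F\|_{L^2}\le C(\|\nabla u\|_{L^2}+\|P-\bar P\|_{L^2}+\|H\|_{L^4}^2)$ (read off directly from \eqref{f5}) and $\|\omega\|_{L^2}\le C\|\nabla u\|_{L^2}$, I would interpolate against the $L^2$-bound on $\nabla F$ (resp.\ $\nabla\omega$) to produce the exponent $\tfrac{3p-6}{2p}$. Then \eqref{f12} drops out by splitting $\nabla u$ via \eqref{f6}, writing $(2\mu+\lambda)\divv u=F+(P-\bar P)+\tfrac12|H|^2$, and invoking the $L^p$ bounds on $F$ and $\omega$. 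Finally, \eqref{f13} follows from \eqref{f7} applied to $\curl H$, which is divergence-free and satisfies $\curl H\times n=0$, yielding $\|\nabla\curl H\|_{L^p}\le C\|\curl^2 H\|_{L^p}$.

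The main obstacle will be the careful trace analysis needed to produce the Neumann datum for $F$: differentiating the boundary conditions $u\cdot n=0$ and $H\cdot n=0$ tangentially forces $\dot u\cdot n=-u\cdot\nabla n\cdot u$ and $(H\cdot\nabla H)\cdot n=-H\cdot\nabla n\cdot H$ on $\partial\Omega$, so the Neumann data for $F$ contains purely geometric boundary contributions driven by the second fundamental form of $\partial\Omega$. These must be absorbed into the right-hand side of the elliptic estimate via the trace theorem, taking care not to lose the claimed $L^p$ scaling.
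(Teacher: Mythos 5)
Your proposal is correct and follows essentially the same route as the paper, which does not prove this lemma itself but cites \cite[Lemma 2.2]{CHPS23}: rewrite the momentum equation as $\nabla F-\mu\nabla\times\omega=\rho\dot u-H\cdot\nabla H$, use div-curl estimates for fields with vanishing normal or tangential trace for \eqref{f6}, \eqref{f7}, \eqref{f13}, $L^p$ elliptic theory for the Neumann problem solved by $F$ and the div-curl system for $\omega$ for the gradient bounds, and Gagliardo--Nirenberg interpolation plus the decomposition $(2\mu+\lambda)\divv u=F+(P-\bar P)+\frac12|H|^2$ for the remaining estimates. Two trivial remarks: the sign in your curl equation should be $\mu\Delta\omega=+\nabla\times(\rho\dot u-H\cdot\nabla H)$, and the trace analysis you flag as the main obstacle is not actually needed for \eqref{f8}, since $\omega\times n=0$ forces $(\nabla\times\omega)\cdot n=0$ on $\partial\Omega$, so the Neumann datum is simply $(\rho\dot u-H\cdot\nabla H)\cdot n$ and the weak (divergence-form) formulation yields $\|\nabla F\|_{L^p}\le C\|\rho\dot u-H\cdot\nabla H\|_{L^p}$ by duality without expanding $\dot u\cdot n$ into geometric terms.
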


Finally, we have the following estimates on the material derivative of $u$ (cf. \cite[Lemma 2.10]{CL23}).
\begin{lemma}
Under the assumption of Lemma \ref{l23}, there exists a positive constant $\Lambda$ depending only on $\Omega$ such that
\begin{gather}
\|\dot u\|_{L^6}\le\Lambda\big(\|\nabla\dot u\|_{L^2}+\|\nabla u\|_{L^2}^2),\\
\|\nabla\dot u\|_{L^2}\le\Lambda\big(\|\divv\dot u\|_{L^2}+\|\curl\dot u\|_{L^2}+\|\nabla u\|_{L^4}^2\big).
\end{gather}
\end{lemma}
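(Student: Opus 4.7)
The plan starts from the boundary identity \eqref{a15} together with $u\cdot n|_{\partial\Omega}=0$ (hence $u_t\cdot n|_{\partial\Omega}=0$), which yield $\dot u\cdot n=-u\cdot\nabla n\cdot u$ on $\partial\Omega$. Combined with the trace embedding $H^1(\Omega)\hookrightarrow L^4(\partial\Omega)$ and Lemma \ref{l23} applied to $u$, this gives
\begin{equation*}
\|\dot u\cdot n\|_{L^2(\partial\Omega)}\le C\|u\|_{L^4(\partial\Omega)}^2\le C\|u\|_{H^1(\Omega)}^2\le C\|\nabla u\|_{L^2}^2,
\end{equation*}
so the normal trace of $\dot u$ on $\partial\Omega$ is quadratically controlled by $\|\nabla u\|_{L^2}$.

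For the first inequality, I would use the Sobolev embedding $\|\dot u\|_{L^6}\le C(\|\dot u\|_{L^2}+\|\nabla\dot u\|_{L^2})$ and then dispose of the $L^2$ term via the generalized Poincaré inequality
\begin{equation*}
\|f\|_{L^2(\Omega)}\le C\bigl(\|\nabla f\|_{L^2(\Omega)}+\|f\cdot n\|_{L^2(\partial\Omega)}\bigr),\qquad f\in H^1(\Omega;\mathbb{R}^3),
\end{equation*}
which holds on any bounded smooth domain. This inequality is obtained by a standard compactness--contradiction argument: a bounded sequence in $H^1$ for which the right-hand side tends to zero subsequentially converges (using compactness of the trace into $L^2(\partial\Omega)$) to a constant vector field with vanishing normal trace on $\partial\Omega$, and positive definiteness of the matrix $M_{ij}\triangleq\int_{\partial\Omega}n_in_j\,dS$ (which follows from the divergence-theorem identity $\int_{\partial\Omega}x_in_j\,dS=|\Omega|\delta_{ij}$) forces this constant to be zero, contradicting normalization. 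Applying this estimate with $f=\dot u$ and invoking the boundary bound yields $\|\dot u\|_{L^2}\le C\|\nabla\dot u\|_{L^2}+C\|\nabla u\|_{L^2}^2$, hence the first inequality.

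For the second inequality, the plan is to lift the normal trace. Let $\tilde n$ be a smooth extension of $n$ to $\overline\Omega$ and set $w\triangleq(u\cdot\nabla\tilde n)\times(-u\times\tilde n)$. A direct BAC-CAB computation, like the one sketched in the introduction, shows $w\cdot n=u\cdot\nabla n\cdot u$ on $\partial\Omega$, so $v\triangleq\dot u+w$ satisfies $v\cdot n|_{\partial\Omega}=0$. The div-curl estimate \eqref{f6} applied to $v$ and the triangle inequality then give
\begin{equation*}
\|\nabla\dot u\|_{L^2}\le C\bigl(\|\divv\dot u\|_{L^2}+\|\curl\dot u\|_{L^2}\bigr)+C\bigl(\|\nabla w\|_{L^2}+\|\divv w\|_{L^2}+\|\curl w\|_{L^2}\bigr).
\end{equation*}
The pointwise bounds $|w|\le C|u|^2$ and $|\nabla w|\le C(|u||\nabla u|+|u|^2)$, combined with Hölder's inequality, Lemma \ref{l23} applied to $u$, and Gagliardo--Nirenberg ($\|u\|_{L^4}\le C\|\nabla u\|_{L^2}\le C\|\nabla u\|_{L^4}$ on the bounded $\Omega$), control every $w$-term by $C\|\nabla u\|_{L^4}^2$, completing the second inequality.

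I expect the main obstacle to be the generalized Poincaré inequality used in the first step, which must exploit the fact that only the normal component of $f$ is controlled on $\partial\Omega$; this is precisely where the boundedness of $\Omega$ (via positive definiteness of $M$) intervenes. The lifting argument for the second inequality, by contrast, is essentially algebraic once the correct correction $w$ is identified, and it crucially avoids the $L^{12}(\Omega)$-issues that would obstruct a direct lifting approach to the $L^6$-bound in the first estimate.
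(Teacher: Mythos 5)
Your proposal is correct, and the paper itself gives no proof of this lemma (it is imported verbatim from [CL23, Lemma~2.10]); your argument --- the boundary identity \eqref{a15} giving $\dot u\cdot n=-u\cdot\nabla n\cdot u$ on $\partial\Omega$, the tangential correction $w=(u\cdot\nabla \tilde n)\times u^\bot$, the div-curl estimate, and a Poincar\'e inequality controlled by the normal trace --- is essentially the standard proof from that reference. The only point worth flagging is that \eqref{f6} is stated for the solution $u$ itself, so you should invoke the underlying general div-curl estimate for arbitrary $H^1$ fields with vanishing normal trace on a simply connected domain, which is exactly what the proof of \eqref{f6} establishes.
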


\section{Proof of Theorem \ref{thm1}}\label{sec3}

This section is devoted to proving Theorem \ref{thm1}.

\begin{lemma}\label{l31}
Under the assumptions of Theorem \ref{thm1}, there exist two positive constants $C_3$ and $\eta_3$, which are dependent on $K$,
but independent of $t$, such that for any $t\ge 0$,
\begin{align}\label{w1}
\|\rho(\cdot, t)-\bar\rho_0\|_{L^2}+\|\sqrt\rho u(\cdot, t)\|_{L^2}+\|H(\cdot, t)\|_{L^2}\le C_3e^{-\eta_3t}.
\end{align}
\end{lemma}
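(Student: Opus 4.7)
\textbf{Proof proposal for Lemma \ref{l31}.}

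The plan is to construct a Lyapunov-type functional $\widetilde{\E}(t)$, equivalent to $\|(\rho-\bar\rho_0,\sqrt{\rho}u,H)(\cdot,t)\|_{L^2}^2$, that satisfies $\frac{d}{dt}\widetilde{\E}+\widetilde{\D}\le 0$ with a dissipation $\widetilde{\D}$ that dominates $\widetilde{\E}$ itself, and then close the argument by Grönwall. Throughout I will use that continuity and $(u\cdot n)|_{\partial\Omega}=0$ give $\bar\rho(t)=\bar\rho_0$ and $\overline{P}(t)=\overline{P(\rho)}(t)$ (so $\bar P$ is bounded via \eqref{a9}), and that under \eqref{a9} the relative internal energy $G(\rho)$ defined in \eqref{a7} is pointwise equivalent to $(\rho-\bar\rho_0)^2$.

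\textbf{Step 1 (basic energy identity).} Multiplying $\eqref{a1}_2$ by $u$, $\eqref{a1}_3$ by $H$, integrating by parts using the Navier-slip conditions \eqref{a4} (so that boundary terms produced by $\mu\Delta u+(\mu+\lambda)\nabla\divv u$ and $\nabla\times(\nabla\times H)$ drop out because $\curl u\times n=\curl H\times n=0$ on $\partial\Omega$), and combining with the continuity equation to rewrite the pressure work as $\frac{d}{dt}\int G(\rho)\,dx$, I obtain
\begin{equation*}
\frac{d}{dt}\E_0(t)+\mu\|\nabla u\|_{L^2}^2+(\mu+\lambda)\|\divv u\|_{L^2}^2+\|\curl H\|_{L^2}^2=0,\qquad \E_0(t)\triangleq\int\Big(\tfrac12\rho|u|^2+\tfrac12|H|^2+G(\rho)\Big)dx.
\end{equation*}
This gives the $L^2$ energy, but dissipation on $\rho-\bar\rho_0$ is still missing.

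\textbf{Step 2 (Bogovskii test to extract pressure dissipation).} Following the Lions–Feireisl strategy, I set $\phi\triangleq\cb[P-\bar P]$, for which $\divv\phi=P-\bar P$, $\phi|_{\partial\Omega}=0$, and $\|\phi\|_{H^1}\le C\|P-\bar P\|_{L^2}$ by Lemma \ref{l22}. Testing $\eqref{a1}_2$ against $\phi$ produces the identity $\frac{d}{dt}\int\rho u\cdot\phi\,dx+\|P-\bar P\|_{L^2}^2=\mathcal{R}(t)$, where $\mathcal{R}(t)$ collects (i) viscous contributions bounded by $\varepsilon\|P-\bar P\|_{L^2}^2+C_\varepsilon\|\nabla u\|_{L^2}^2$; (ii) the convective term $\int\rho u\otimes u:\nabla\phi\,dx$, absorbed via $\|\rho u\otimes u\|_{L^2}\le\hat\rho^{1/2}\|u\|_{L^3}\|\sqrt\rho u\|_{L^6}$ together with Lemma \ref{l23} and Young's inequality; (iii) the Lorentz term $\int(\nabla\times H)\times H\cdot\phi\,dx$, handled by $\|H\|_{L^4}^2\|\nabla\phi\|_{L^2}\le\varepsilon\|P-\bar P\|_{L^2}^2+C_\varepsilon\|H\|_{L^2}\|\nabla H\|_{L^2}^3$; and (iv) the time-derivative remainder $-\int\rho u\cdot\phi_t\,dx$, for which I use the transport equation $(P-\bar P)_t=-\divv(Pu)-(\gamma-1)P\divv u$ and the $L^p$ continuity of $\cb$ on divergences to bound $\|\phi_t\|_{L^2}\le C(\|u\|_{L^6}+\|\divv u\|_{L^2})$. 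Choosing $\varepsilon$ small and combining linearly with Step 1 produces
\begin{equation*}
\frac{d}{dt}\widetilde{\E}(t)+c_0\bigl(\|\nabla u\|_{L^2}^2+\|\curl H\|_{L^2}^2+\|P-\bar P\|_{L^2}^2\bigr)\le 0,
\end{equation*}
where $\widetilde{\E}(t)\triangleq\E_0(t)+\delta\int\rho u\cdot\phi\,dx$ is equivalent to $\E_0(t)$ for $\delta$ small thanks to \eqref{a9}.

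\textbf{Step 3 (coercivity $\widetilde{\D}\gtrsim\widetilde{\E}$ and conclusion).} Since $\bar\rho(t)=\bar\rho_0$ and $\rho\in[0,\hat\rho]$, there is $C(\hat\rho)$ with $G(\rho)\le C(\hat\rho)(\rho-\bar\rho_0)^2$ and $|P-\bar P|\ge c(\hat\rho)|\rho-\bar\rho_0|^2/|\rho-\bar\rho_0|$... more precisely $|P(\rho)-P(\bar\rho_0)|\ge c(\hat\rho)|\rho-\bar\rho_0|$ on $\{\rho\le\hat\rho\}$, so $\|P-\bar P\|_{L^2}\ge c\|\rho-\bar\rho_0\|_{L^2}-|\bar P-P(\bar\rho_0)|\,|\Omega|^{1/2}$, and the deficit is itself controlled by $\|\rho-\bar\rho_0\|_{L^2}^2$ since $\bar P-P(\bar\rho_0)=\overline{P(\rho)-P(\bar\rho_0)}$. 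For the kinetic and magnetic pieces I apply Lemma \ref{l23}: $\|H\|_{L^2}\le C\|\nabla H\|_{L^2}\le C\|\curl H\|_{L^2}$ by \eqref{f7}, and $\|\sqrt\rho u\|_{L^2}^2\le\hat\rho\|u\|_{L^2}^2\le C\hat\rho\|\nabla u\|_{L^2}^2$. Combining, $\widetilde{\D}(t)\ge\eta\,\widetilde{\E}(t)$ for some $\eta>0$ depending on $\hat\rho$ and $\Omega$, so the inequality from Step 2 becomes $\frac{d}{dt}\widetilde{\E}+\eta\widetilde{\E}\le 0$, and Grönwall yields the exponential decay \eqref{w1}.

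\textbf{Main obstacle.} The delicate point is the Bogovskii time-derivative term $\int\rho u\cdot\phi_t\,dx$: one must rewrite $(P-\bar P)_t$ as a divergence plus a lower-order term (using the continuity equation to avoid introducing $u_t$), and then use the stability bound $\|\cb[\divv g]\|_{L^2}\le C\|g\|_{L^2}$ of Lemma \ref{l22} under $(g\cdot n)|_{\partial\Omega}=0$. The uniform bound \eqref{a9} on $\rho$ is essential here because it converts all pressure-type quantities into quantities linear in $\rho-\bar\rho_0$ with constants depending only on $\hat\rho$.
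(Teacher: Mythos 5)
Your proposal is correct and follows essentially the same strategy as the paper: the basic energy identity, a Bogovskii-operator correction to extract dissipation on the density/pressure deviation, coercivity of the dissipation via the Poincar\'e and div-curl estimates of Lemmas \ref{l23} and \ref{l24}, and Gr\"onwall. The only (cosmetic) differences are that the paper tests $\eqref{a1}_2$ against $\cb[\rho-\bar\rho_0]$ rather than $\cb[P-\bar P]$ (which makes the time-derivative term slightly cleaner, since $\rho_t=-\divv(\rho u)$ is already in divergence form), and that with the slip conditions \eqref{a4} the viscous dissipation naturally takes the form $(2\mu+\lambda)\|\divv u\|_{L^2}^2+\mu\|\curl u\|_{L^2}^2$ rather than $\mu\|\nabla u\|_{L^2}^2+(\mu+\lambda)\|\divv u\|_{L^2}^2$; both are comparable to $\|\nabla u\|_{L^2}^2$ by \eqref{f6}, so nothing in your argument changes.
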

\begin{proof}[Proof]
Noticing that
\begin{gather}
\bar\rho=\f{1}{|\Omega|}\int\rho(x, t)dx\equiv\f{1}{|\Omega|}\int\rho_0dx=\bar\rho_0,\label{w2}\\
-\Delta u=-\nabla\divv u+\curl\curl u,\ -\nabla\times(\nabla\times H)=\Delta H,\label{z2} \\
(\nabla\times H)\times H=H\cdot\nabla H-\frac12\nabla|H|^2,\ \
\nabla\times(u\times H)=H\cdot\nabla u-u\cdot\nabla H-H\divv u,\label{z3}
\end{gather}
then multiplying $\eqref{a1}_1$, $\eqref{a1}_2$, and $\eqref{a1}_3$ by $G'(\rho)$, $u$, and $H$, respectively, and integration by parts, one gets that
\begin{align}\label{w6}
&\frac{d}{dt}\int\Big(\frac12\rho|u|^2+G(\rho)\Big)dx+(2\mu+\lambda)\int(\divv u)^2dx+\mu\int|\curl u|^2dx\nonumber\\
&=-\int\div (\rho u)G'(\rho)dx-\int u\cdot\nabla(P-\bar P)dx+\int H\cdot\nabla H\cdot udx-\frac12\int u\cdot\nabla|H|^2dx\nonumber\\
&=\int \rho u\cdot\nabla Q(\rho)dx-\int u\cdot\nabla Pdx+\int u^i\partial_j(H^iH^j)dx-\frac12\int u^i\partial_i|H|^2dx\nonumber\\
&=-\int H\cdot \nabla u\cdot Hdx+\frac12\int\divv u|H|^2dx,
\end{align}
and
\begin{align}\label{w7}
\f12\frac{d}{dt}\int|H|^2dx+\int|\nabla H|^2dx=\int H\cdot\nabla u\cdot Hdx-\frac12\int\divv u|H|^2dx.
\end{align}
Here and in what follows, we use the Einstein convention that the repeated indices denote the summation.
Hence, we derive from \eqref{w6} and \eqref{w7} that
\begin{align}\label{w8}
\frac{d}{dt}\int\Big(\frac12\rho|u|^2+G(\rho)+\f12|H|^2\Big)dx
+(2\mu+\lambda)\int(\divv u)^2dx+\mu\int|\curl u|^2dx
+\int|\nabla H|^2dx=0.
\end{align}

By \eqref{a7} and \eqref{w2}, there exists a positive constant $\tilde C<1$ depending only on $\gamma$, $\bar\rho_0$, and $\hat\rho$ such that, for any $\rho\in [0, 2\hat\rho]$,
\begin{gather}
\tilde C^2(\rho-\bar\rho_0)^2\le \tilde CG(\rho)\le (P-P(\bar\rho_0))(\rho-\bar\rho_0),\label{w9}\\
\|P-\bar P\|_{L^2}^2\le C\|P-P(\bar\rho_0)\|_{L^2}^2\le C\int G(\rho)dx.\label{w10}
\end{gather}
Then, multiplying $\eqref{a1}_2$ by $\cb[\rho-\bar\rho_0]$, we deduce from \eqref{a8} and Lemma \ref{l21} that
\begin{align*}
\int(P-P(\bar\rho_0))(\rho-\bar\rho_0)dx
&=\frac{d}{dt}\int\rho u\cdot\cb[\rho-\bar\rho_0]dx-\int\rho u\cdot\nabla\cb[\rho-\bar\rho_0]\cdot udx
-\int\rho u\cdot\cb[\rho_t]dx\nonumber\\
&\quad+\mu\int\nabla u\cdot\nabla\cb[\rho-\bar\rho_0]dx+(\mu+\lambda)\int(\rho-\bar\rho_0)\divv udx\nonumber\\
&\quad+\int\Big(\frac12\nabla|H|^2-\divv(H\otimes H)\Big)\cdot\cb[\rho-\bar\rho_0]dx\nonumber\\
&=\frac{d}{dt}\int\rho u\cdot\cb[\rho-\bar\rho_0]dx
+C\|\sqrt\rho u\|_{L^4}^2\|\rho-\bar\rho_0\|_{L^2}+C\|\rho u\|_{L^2}^2\nonumber\\
&\quad+C\|\rho-\bar\rho_0\|_{L^2}\|\nabla u\|_{L^2}+C\|H\|_{L^4}^2\|\rho-\bar\rho_0\|_{L^2}\nonumber\\
&\le \frac{d}{dt}\int\rho u\cdot\cb[\rho-\bar\rho_0]dx
+C\|\sqrt\rho u\|_{L^2}^\frac12\|\sqrt\rho u\|_{L^6}^\frac32\|\rho-\bar\rho_0\|_{L^2}+C\|\nabla u\|_{L^2}^2\nonumber\\
&\quad+C\|\rho-\bar\rho_0\|_{L^2}\|\nabla u\|_{L^2}
+C\|H\|_{L^3}\|H\|_{L^6}\|\rho-\bar\rho_0\|_{L^2}\nonumber\\
&\le \frac{d}{dt}\int\rho u\cdot\cb[\rho-\bar\rho_0]dx+\frac{\tilde C^2}{2}\|\rho-\bar\rho_0\|_{L^2}^2+C\|\nabla u\|_{L^2}^2
+C\|\nabla H\|_{L^2}^2,
\end{align*}
which along with \eqref{w9} and \eqref{w10} leads to
\begin{align*}
\tilde C^2\|\rho-\bar\rho_0\|_{L^2}^2&\le \tilde C\int G(\rho)dx\le \int(P-P(\bar\rho_0))(\rho-\bar\rho_0)dx\nonumber\\
&\le \frac{d}{dt}\int\rho u\cdot\cb[\rho-\bar\rho_0]dx+\frac{\tilde C^2}{2}\|\rho-\bar\rho_0\|_{L^2}^2+C\|\nabla u\|_{L^2}^2
+C\|\nabla H\|_{L^2}^2.
\end{align*}
Thus, by \eqref{f6}, we arrive at
\begin{align}\label{w13}
-\frac{d}{dt}\int\rho u\cdot\cb[\rho-\bar\rho_0]dx+\frac{\tilde C^2}{2}\|\rho-\bar\rho_0\|_{L^2}^2
\le C\|\divv u\|_{L^2}^2+C\|\curl u\|_{L^2}^2+C\|\nabla H\|_{L^2}^2.
\end{align}

Now we choose a positive constant $D_1$ suitably large and define the temporal energy functional
\begin{align*}
\cm_1(t)=D_1\int\Big(\frac12\rho|u|^2+G(\rho)+\frac12|H|^2\Big)dx-\int\rho u\cdot\cb[\rho-\bar\rho_0]dx.
\end{align*}
It follows from \eqref{w9} that
\begin{align*}
\left|\int\rho u\cdot\cb[\rho-\bar\rho_0]dx\right|\le \tilde C_2\int\Big(\frac12\rho|u|^2+G(\rho)\Big)dx.
\end{align*}
Thus, $\cm_1(t)$ is equivalent to $\|(\rho-\bar\rho_0, \sqrt{\rho}u, H)\|_{L^2}^2$ provided we choose $D_1$ large enough. Taking a linear combination of \eqref{w8} and \eqref{w13}, we deduce that, for any $t\ge 0$,
\begin{align}\label{w15}
\frac{d}{dt}\cm_1(t)+\frac{\cm_1(t)}{D_1}+\frac{\|\divv u\|_{L^2}^2+\|\curl u\|_{L^2}^2+\|\nabla H\|_{L^2}^2}{D_1}\le 0.
\end{align}
Integrating \eqref{w15} with respect to $t$ over $[0, \infty)$ gives \eqref{w1}.
\end{proof}

Next, we establish the time-decay rate of $\|\nabla u\|_{L^2}$ and $\|\nabla H\|_{L^2}$.
\begin{lemma}
Under the assumptions of Theorem \ref{thm1}, there exist two positive constant $C_4$ and $\eta_4$, which are dependent on $M$ and $K$, but independent of $t$, such that for any $t\ge 0$,
\begin{align}\label{w17}
\|\nabla u(\cdot, t)\|_{L^2}+\|\nabla H(\cdot, t)\|_{L^2}+\|H(\cdot, t)\|_{L^4}\le C_4e^{-\eta_4t}.
\end{align}
\end{lemma}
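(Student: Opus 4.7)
The plan is to bootstrap the first-order exponential decay from Lemma \ref{l31} into an $H^1$-level decay by deriving a differential inequality of Lyapunov type for a functional equivalent to $\|\nabla u\|_{L^2}^2+\|\nabla H\|_{L^2}^2$, and then closing it using the fact that its nonlinear right-hand side is controlled by an additional factor of $\|\nabla u\|_{L^2}^2+\|\nabla H\|_{L^2}^2$, which is small for large $t$ thanks to the integrability $\int_0^\infty(\|\nabla u\|_{L^2}^2+\|\nabla H\|_{L^2}^2)\,dt<\infty$ that follows from integrating \eqref{w15}. Note that the $\|H\|_{L^4}$ bound is not an independent estimate: by Lemma \ref{l21} with $p=4$ and the boundary condition $H\cdot n=0$, one has $\|H\|_{L^4}\le C\|H\|_{L^2}^{1/4}\|\nabla H\|_{L^2}^{3/4}$, so the desired decay of $\|H\|_{L^4}$ is automatic from the decay of $\|H\|_{L^2}$ in Lemma \ref{l31} and of $\|\nabla H\|_{L^2}$.

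To produce the higher-order differential inequality, I would test $\eqref{a1}_2$ with $u_t$ and use the Navier-slip conditions together with the identity $-\Delta u=-\nabla\divv u+\curl\curl u$ and integration by parts to rewrite the viscous terms as
\begin{align*}
\tfrac{d}{dt}\Bigl[\tfrac{\mu}{2}\|\curl u\|_{L^2}^2+\tfrac{\mu+\lambda}{2}\|\divv u\|_{L^2}^2\Bigr]+\|\sqrt\rho u_t\|_{L^2}^2,
\end{align*}
moving the pressure term into a time derivative via $P_t=-\divv(Pu)-(\gamma-1)P\divv u$ and the Lorentz term via the representation in \eqref{z3}. On the magnetic side, I would test $\eqref{a1}_3$ with $H_t$, yielding $\tfrac{1}{2}\tfrac{d}{dt}\|\curl H\|_{L^2}^2+\|H_t\|_{L^2}^2$ on the left, and control the right-hand side through the identity $\nabla\times(u\times H)=H\cdot\nabla u-u\cdot\nabla H-H\divv u$. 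To handle $\int(H\cdot\nabla H)\cdot u_t$ and its magnetic counterpart $\int(H\cdot\nabla u)\cdot H_t$, I would rewrite them in terms of $\dot u$, $H_t$, and a commutator with $u\cdot\nabla$; the effective viscous flux $F$ defined in \eqref{f5} and the $L^p$ estimates \eqref{f8}–\eqref{f12} of Lemma \ref{l24} are the key tools for dominating the convective and $H\cdot\nabla H$ contributions in $L^3$–$L^6$, so that each nonlinear remainder carries a prefactor of $\|\nabla u\|_{L^2}^2+\|\nabla H\|_{L^2}^2$ times at worst $\|\sqrt\rho u_t\|_{L^2}^2+\|\nabla^2 H\|_{L^2}^2+\|\nabla u\|_{L^2}^2+\|\nabla H\|_{L^2}^2$, plus a benign multiple of $\|\rho-\bar\rho_0\|_{L^2}^2+\|H\|_{L^2}^2$.

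Combining these two estimates produces the schematic inequality
\begin{align*}
\frac{d}{dt}\Phi(t)+\Psi(t)\le C\bigl(\|\nabla u\|_{L^2}^2+\|\nabla H\|_{L^2}^2\bigr)\Psi(t)+C\bigl(\|\rho-\bar\rho_0\|_{L^2}^2+\|H\|_{L^2}^2\bigr),
\end{align*}
where $\Phi(t)$ is equivalent to $\|\nabla u\|_{L^2}^2+\|\nabla H\|_{L^2}^2$ and $\Psi(t)\ge c\Phi(t)+\|\sqrt\rho u_t\|_{L^2}^2+\|\nabla^2H\|_{L^2}^2$; this is the analogue of \eqref{w30}. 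From Lemma \ref{l31} and \eqref{w15}, $\|\nabla u\|_{L^2}^2+\|\nabla H\|_{L^2}^2$ is uniformly bounded and its time integral is finite, so there exists $T_0>0$ such that $C(\|\nabla u\|_{L^2}^2+\|\nabla H\|_{L^2}^2)\le \tfrac12$ for $t\ge T_0$. For $t\ge T_0$, the nonlinear term is absorbed into $\Psi$, giving
\begin{align*}
\frac{d}{dt}\Phi(t)+\tfrac{c}{2}\Phi(t)\le C\bigl(\|\rho-\bar\rho_0\|_{L^2}^2+\|H\|_{L^2}^2\bigr)\le Ce^{-2\eta_3 t},
\end{align*}
and Gronwall's inequality yields \eqref{w17}; for $t\in[0,T_0]$ the conclusion is trivial since $\Phi$ is bounded on any finite interval.

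The main obstacle I anticipate is producing the decomposition in which every nonlinear remainder indeed carries the small prefactor $\|\nabla u\|_{L^2}^2+\|\nabla H\|_{L^2}^2$. In particular, the boundary terms arising when integrating the pressure and Lorentz contributions against $u_t$ (rather than $\dot u$) force one to substitute $u_t=\dot u-u\cdot\nabla u$ and to deal with the non-trivial boundary contribution $u\cdot\nabla u\cdot n=-u\cdot\nabla n\cdot u$ coming from the curvature of $\partial\Omega$, as highlighted in \eqref{a15}. Controlling the resulting boundary integral of the form $\int_{\partial\Omega}(u\cdot\nabla n\cdot u)F\,dS$ via the trace inequality and \eqref{f8}–\eqref{f12} is the delicate step; once it is carried out, the rest of the estimates are routine $L^p$ interpolations using Lemmas \ref{l21}–\ref{l24} and the uniform bound \eqref{a9}.
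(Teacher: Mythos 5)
Your overall strategy is the paper's strategy (a Lyapunov inequality for an $H^1$-level functional, closed by smallness of $\|\nabla u\|_{L^2}^2+\|\nabla H\|_{L^2}^2$ for large $t$, with the $\|H\|_{L^4}$ bound essentially free), and your identification of the boundary integral $\int_{\partial\Omega}(u\cdot\nabla n\cdot u)F\,dS$ as the delicate point is on target. But there are two genuine gaps. First, the structure you claim for the nonlinear remainder --- every term either carries the small prefactor $\|\nabla u\|_{L^2}^2+\|\nabla H\|_{L^2}^2$ or is a ``benign multiple of $\|\rho-\bar\rho_0\|_{L^2}^2+\|H\|_{L^2}^2$'' --- is not achievable. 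Terms such as $\int(P-\bar P)\nabla u:\nabla u\,dx$ and the trace term $\int_{\partial\Omega}|u|^2\,dS$ inevitably produce $C\|\nabla u\|_{L^2}^2$ with an order-one constant (see the right-hand side of \eqref{w30}, which contains $\|\nabla u\|_{L^2}^2+\|\nabla H\|_{L^2}^2$ linearly). Since $C\|\nabla u\|_{L^2}^2\sim C\Phi(t)$, your inequality actually reads $\frac{d}{dt}\Phi+\Psi\le\cdots+C\Phi$, which cannot be closed by absorbing into $\frac{c}{2}\Phi$ on the left. The necessary repair is to add a large multiple $D_3$ of the lower-order Lyapunov inequality \eqref{w15}, so that its dissipation $\frac{1}{D_1}(\|\divv u\|_{L^2}^2+\|\curl u\|_{L^2}^2+\|\nabla H\|_{L^2}^2)$ dominates these order-one terms; this coupling is exactly why the paper's functional $\cm_2$ in \eqref{w35} contains $D_3\cm_1$, and your proposal omits it.

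Second, the step ``$\|\nabla u\|_{L^2}^2+\|\nabla H\|_{L^2}^2$ is uniformly bounded and its time integral is finite, so there exists $T_0$ with $C(\|\nabla u\|_{L^2}^2+\|\nabla H\|_{L^2}^2)\le\frac12$ for all $t\ge T_0$'' is not a valid deduction. A nonnegative integrable bounded function need not tend to zero (it can have spikes of height one on intervals of shrinking length), and in any case no uniform-in-time bound on $\|\nabla u\|_{L^2}$ is available a priori --- the hypothesis \eqref{a9} controls only $\|\rho\|_{L^\infty}$, and \eqref{a8} gives bounds only on finite time intervals. The integrability \eqref{zzz} together with continuity \eqref{w33} yields smallness of the functional only at \emph{some} time $T_1$; to propagate smallness forward you must run a continuity/bootstrap argument in which the Lyapunov inequality (valid while the functional is small) is used to show the functional cannot return to the threshold --- this is the contradiction argument of \eqref{b35}--\eqref{b38} in the paper. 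Without that bootstrap, your absorption step for $t\ge T_0$ is unjustified. Your observation that $\|H\|_{L^4}\le C\|H\|_{L^2}^{1/4}\|\nabla H\|_{L^2}^{3/4}$ makes the $L^4$ decay automatic is correct and is a mild simplification relative to the paper, which instead carries $D_2\|H\|_{L^4}^4$ inside the functional (partly to gain the dissipation $\int|H|^2|\curl H|^2\,dx$ used elsewhere).
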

\begin{proof}[Proof]
By the definition of the material derivative, \eqref{z2}, and \eqref{z3}, one can rewrite $\eqref{a1}_2$ as
\begin{align}\label{w18}
\rho\dot u+\nabla(P-\bar P)=(2\mu+\lambda)\nabla\divv u-\mu\curl\curl u+H\cdot\nabla H-\frac12\nabla|H|^2.
\end{align}
Multiplying \eqref{w18} by $\dot u$ and integration by parts, we get that
\begin{align}\label{w19}
\int\rho|\dot u|^2dx&=-\int\dot u\cdot\nabla(P-\bar P)dx+(2\mu+\lambda)\int\nabla\divv u\cdot\dot udx-\mu\int\curl\curl u\cdot\dot udx\nonumber\\
&\quad+\int\Big(H\cdot\nabla H-\frac12\nabla|H|^2\Big)\cdot\dot udx
\triangleq\sum_{i=1}^4I_i.
\end{align}

By $\eqref{a1}_1$ and $P=\rho^\gamma$, we have
\begin{align*}
P_t+\div(Pu)+(\gamma-1)P\divv u=0,
\end{align*}
which gives that
\begin{equation}\label{z3.26}
(P-\Bar{P})_t+u\cdot\nabla(P-\Bar{P})+\gamma P\divv u-(\gamma-1)\overline{P\divv u}=0.
\end{equation}
Owing to
\begin{align*}
\overline{P\divv u}=\frac{1}{|\Omega|}\int \rho^\gamma\divv u\mathrm{d}dx\leq C(\hat{\rho},\gamma,\Omega)\left\|\divv u\right\|_{L^2},
\end{align*}
then it follows from \eqref{z3.26} that
\begin{align}\label{z4}
I_1&=-\int u_t\cdot\nabla(P-\bar P)dx-\int u\cdot\nabla u\cdot\nabla(P-\bar P)dx\nonumber\\
&=\frac{d}{dt}\int(P-\bar P)\divv udx-\int\divv u(P-\bar P)_tdx-\int u\cdot\nabla u\cdot\nabla (P-\bar P)dx\nonumber\\
&=\frac{d}{dt}\int(P-\bar P)\divv udx-\gamma\int P(\divv u)^2dx-\int u\cdot\na(P-\bar P)\divv udx\nonumber\\
&\quad+(\gamma-1)\int\divv u\overline{P\divv u}dx-\int u\cdot\nabla u\cdot\nabla Pdx\nonumber\\
&=\frac{d}{dt}\int(P-\bar P)\divv udx-\gamma\int P(\divv u)^2dx+\int (P-\bar P)\nabla u:\nabla udx\nonumber\\
&\quad+(\gamma-1)\int\divv u\overline{P\divv u}dx-\int_{\partial\Omega}Pu\cdot\nabla u\cdot ndS\nonumber\\
&\le \frac{d}{dt}\int(P-\bar P)\divv udx+C\|\nabla u\|_{L^2}^2,
\end{align}
where we have used
\begin{align*}
&\int u\cdot\na(P-\bar P)\divv udx-\int u\cdot\nabla u\cdot\nabla (P-\bar P)dx \notag \\
&=\int u^i\pa_i(P-\bar P)\pa_ju^jdx-\int u^i\partial_iu^j\partial_j(P-\bar P)dx\nonumber\\
&=-\int\pa_iu^i\pa_ju^j(P-\bar P)dx-\int(P-\bar P)\pa_j\pa_iu^ju^idx-\int u^i\partial_iu^j\partial_j(P-\bar P)dx\nonumber\\
&=-\int (P-\bar P)(\divv u)^2dx+\int (P-\bar P)\partial_iu^j\partial_ju^idx-\int_{\partial\Omega}(P-\bar P)u^i\partial_iu^jn^jdS\nonumber\\
&=-\int (P-\bar P)(\divv u)^2dx+\int (P-\bar P)\nabla u:\nabla udx+\int_{\partial\Omega}(P-\bar P)u\cdot\nabla n\cdot udS  \\
&\le C\int |P-\bar P||\nabla u|^2dx+C\int_{\partial\Omega}|u|^2dS
\le C\|\nabla u\|_{L^2}^2,
\end{align*}
due to \eqref{a4}, \eqref{a15}, the trace theorem, \eqref{z2.5}, and \eqref{a9}.

By virtue of \eqref{a4} and \eqref{a15}, we infer from integration by parts that
\begin{align}\label{z5}
I_2&=(2\mu+\lambda)\int_{\partial\Omega}{\rm div}\,u(\dot{u}\cdot n)dS-(2\mu+\lambda)\int{\rm div}\,u{\rm div}\,\dot{u}dx\nonumber\\
&=(2\mu+\lambda)\int_{\partial\Omega}{\rm div}\,u(u\cdot\nabla u\cdot n)dS-\frac{2\mu+\lambda}{2}\frac{d}{dt}\int({\rm div}\,u)^2dx
-(2\mu+\lambda)\int{\rm div}\,u{\rm div}\,(u\cdot\nabla u)dx\nonumber\\
&=-\frac{2\mu+\lambda}{2}\frac{d}{dt}\int(\div  u)^2dx
-(2\mu+\lambda)\int_{\partial\Omega}\div  u(u\cdot\nabla n\cdot u)dS
-(2\mu+\lambda)\int{\rm div}\,u\partial_j(u^i\partial_iu^j)dx\nonumber\\
&=-\frac{2\mu+\lambda}{2}\frac{d}{dt}\int({\rm div}\,u)^2dx
-(2\mu+\lambda)\int_{\partial\Omega}{\rm div}\,u(u\cdot\nabla n\cdot u)dS\nonumber\\
&\quad-(2\mu+\lambda)\int{\rm div}\,u\nabla u:\nabla udx-(2\mu+\lambda)\int{\rm div}\,uu^j\partial_j\partial_{i}u^idx\nonumber\\
&=-\frac{2\mu+\lambda}{2}\frac{d}{dt}\int({\rm div}\,u)^2dx
-(2\mu+\lambda)\int_{\partial\Omega}{\rm div}\,u(u\cdot\nabla n\cdot u)dS\nonumber\\
&\quad-(2\mu+\lambda)\int{\rm div}\,u\nabla u:\nabla udx+\frac{2\mu+\lambda}{2}\int({\rm div}\,u)^3dx
\nonumber\\
&\le -\frac{2\mu+\lambda}{2}\frac{d}{dt}\int({\rm div}\,u)^2dx
+\frac14\|\sqrt{\rho}\dot{u}\|_{L^2}^2+\frac18\|{\rm curl}^2H\|_{L^2}^2\nonumber\\
&\quad+C\big(\|\nabla u\|_{L^2}^4+\|\nabla H\|_{L^2}^4+\|\nabla u\|_{L^2}^2+\|\nabla H\|_{L^2}^2\big),
\end{align}
where we have used
\begin{align*}
\int{\rm div}\,uu^j\partial_j\partial_iu^idx&=-\int\partial_j(\partial_ku^ku^j)\partial_iu^idx=-\int\partial_j\partial_ku^ku^j\partial_iu^idx
-\int\divv u\partial_ju^j\partial_iu^idx,
\end{align*}
and
\begin{align}\label{3.24}
&\Big|-(2\mu+\lambda)\int_{\partial\Omega}{\rm div}\,u(u\cdot\nabla n\cdot u)dS\Big|\nonumber\\
&=\Big|-\int_{\partial\Omega}\Big(F+(P-\bar P)+\frac12|H|^2\Big)(u\cdot\nabla n\cdot u)dS\Big|\nonumber\\
&\le \Big|\int_{\partial\Omega}F(u\cdot\nabla n\cdot u)dS\Big|+\Big|\int_{\partial\Omega}(P-\bar P)(u\cdot\nabla n\cdot u)dS\Big|
+\frac12\Big|\int_{\partial\Omega}|H|^2(u\cdot\nabla n\cdot u)dS\Big|\nonumber\\
&\le C\int_{\partial\Omega}|F||u|^2dS+C\int_{\partial\Omega}|u|^2dS
+C\int_{\partial\Omega}|H|^2|u|^2dS
\nonumber\\
&\le C\big(\|\nabla F\|_{L^2}\|u\|_{L^4}^2+\|F\|_{L^6}\|u\|_{L^3}\|\nabla u\|_{L^2}
+\|F\|_{L^2}\|u\|_{L^4}^2\big)+C\|\nabla u\|_{L^2}^2\nonumber\\
&\quad+C\big(\|\nabla H\|_{L^2}\|H\|_{L^6}\|u\|_{L^6}^2+\|H\|_{L^6}^2\|u\|_{L^6}\|\nabla u\|_{L^2}+\|H\|_{L^4}^2\|u\|_{L^4}^2\big)\nonumber\\
&\le C\|F\|_{H^1}\|u\|_{H^1}^2+C\|\nabla u\|_{L^2}^2+C\|\nabla u\|_{L^2}^4+C\|\nabla H\|_{L^2}^4\nonumber\\
&\le \frac34\|\sqrt{\rho}\dot{u}\|_{L^2}^2+\frac18\|{\rm curl}^2H\|_{L^2}^2
+C\big(\|\nabla u\|_{L^2}^4+\|\nabla H\|_{L^2}^4+\|\nabla u\|_{L^2}^2+\|\nabla H\|_{L^2}^2\big),
\end{align}
due to the trace theorem, \eqref{z2.5}, \eqref{f6}, \eqref{f7}, \eqref{f8}, \eqref{f13}, and
\begin{align*}
\|H\cdot\nabla H\|_{L^2}
&\le C\|H\|_{L^6}\|\nabla H\|_{L^3} \\
&\le C\|\nabla H\|_{L^2}^\frac12\|\nabla H\|_{L^6}^\frac12 \\
&\le C\|\nabla H\|_{L^2}^\frac12\big(\|{\rm curl}^2H\|_{L^2}+\|\nabla H\|_{L^2}\big)^\frac12\\
&\le C\|\nabla H\|_{L^2}^\frac12\|{\rm curl}^2H\|_{L^2}^\frac12+\|\nabla H\|_{L^2}.
\end{align*}

Noting that
\begin{align*}
\int{\rm curl}\,u\cdot(u^i\partial_i{\rm curl}\,u)dx=-\int{\rm curl}\,u\cdot(u^i\partial_i{\rm curl}\,u)dx-\int|{\rm curl}\,u|^2{\rm div}\,udx,
\end{align*}
we have
\begin{align*}
\int{\rm curl}\,u\cdot(u^i\partial_i{\rm curl}\,u)dx
=-\frac12\int|{\rm curl}\,u|^2{\rm div}\,udx.
\end{align*}
This implies that
\begin{align*}
\int{\rm curl}\,u\cdot{\rm curl}\,(u\cdot\nabla u)dx&=\int{\rm curl}\,u\cdot{\rm curl}\,(u^i\partial_i u)dx
=\int{\rm curl}\,u\cdot\big(u^i{\rm curl}\,\partial_iu+\nabla u^i\times\partial_iu\big)dx\nonumber\\
&=-\int\partial_i({\rm curl}\,u u^i){\rm curl}\,udx+\int(\nabla u^i\times\partial_iu)\cdot{\rm curl}\,udx\nonumber\\
&=-\frac12\int|{\rm curl}\,u|^2{\rm div}\,udx+\int(\nabla u^i\times\partial_iu)\cdot{\rm curl}\,udx,
\end{align*}
which combined with \eqref{a4} and integration by parts leads to
\begin{align}\label{3.26}
I_3&=-\mu\int{\rm curl}\,u\cdot{\rm curl}\,\dot{u}dx\nonumber\\
&=-\frac{\mu}{2}\frac{d}{dt}\int|{\rm curl}\,u|^2dx
-\mu\int{\rm curl}\,u\cdot{\rm curl}\,(u\cdot\nabla u)dx\nonumber\\
&=-\frac{\mu}{2}\frac{d}{dt}\int|{\rm curl}\,u|^2dx
-\mu\int(\nabla u^i\times\partial_iu)\cdot{\rm curl}\,udx
+\frac{\mu}{2}\int|{\rm curl}\,u|^2{\rm div}\,udx\nonumber\\
&\le -\frac{\mu}{2}\frac{d}{dt}\int|{\rm curl}\,u|^2dx
+C\|\nabla u\|_{L^3}^3.
\end{align}

By \eqref{a4} and integration by parts, it indicates that
\begin{align*}
I_{4}&=\int\Big(H\cdot\nabla H-\frac12\nabla|H|^2\Big)\cdot u_tdx+\int\Big(H\cdot\nabla H-\frac12\nabla|H|^2\Big)\cdot u\cdot\nabla udx\nonumber\\
&=\frac{d}{dt}\int\Big(H\cdot\nabla H-\frac12\nabla|H|^2\Big)\cdot udx
+\int\Big((H\otimes H)_t:\nabla u-\frac12(|H|^2)_t{\rm div}\,u\Big)dx\nonumber\\
&\quad+\int\Big(H\cdot\nabla H-\frac12\nabla|H|^2\Big)\cdot u\cdot\nabla udx\nonumber\\
&\le \frac{d}{dt}\int\Big(H\cdot\nabla H-\frac12\nabla|H|^2\Big)\cdot udx
+C\|\nabla u\|_{L^3}\|H_t\|_{L^2}\|H\|_{L^6}\nonumber\\
&\quad+C\|H\|_{L^6}\big(\|{\rm curl}^2H\|_{L^2}+\|\nabla H\|_{L^2}\big)\|\nabla u\|_{L^2}\|u\|_{L^6}\nonumber\\
&\le \frac{d}{dt}\int\Big(H\cdot\nabla H-\frac12\nabla|H|^2\Big)\cdot udx
+C\delta(\|H_t\|_{L^2}^2+\|{\rm curl}^2H\|_{L^2}^2)\nonumber\\
&\quad+C\big(\|\nabla H\|_{L^2}^2\|\nabla u\|_{L^2}^2+\|\nabla u\|_{L^3}^3\big)
+C\big(\|\nabla H\|_{L^2}^6+\|\nabla H\|_{L^2}^2\|\nabla u\|_{L^2}^4\big).
\end{align*}

Multiplying $\eqref{a1}_3$ by $4|H|^2H$ and integration by parts, we have
\begin{align*}
\frac{d}{dt}\|H\|_{L^4}^4&=-4\int\curl H\cdot\curl(|H|^2H)dx+4\int|H|^2H\cdot\nabla u\cdot Hdx
-2\int|H|^4\divv udx\nonumber\\
&=-4\int|\curl H|^2|H|^2dx-4\int\curl H\cdot\nabla|H|^2\times Hdx-2\int|H|^4\divv udx\nonumber\\
&\quad+4\int|H|^2H\cdot\nabla u\cdot Hdx\nonumber\\
&\le -2\int|\curl H|^2|H|^2dx+C\int|H|^2|\nabla H|^2dx+C\int|H|^4|\nabla u|dx\nonumber\\
&\le -2\int|\curl H|^2|H|^2dx+C\|H\|_{L^6}^2\|\nabla H\|_{L^3}^2+C\|\nabla u\|_{L^2}\||H|^2\|_{L^6}\||H|^2\|_{L^3}\nonumber\\
&\le -2\int|\curl H|^2|H|^2dx+C\|\nabla H\|_{L^2}^3\|\curl^2 H\|_{L^2}+C\|\nabla H\|_{L^2}^4
+C\|\nabla u\|_{L^2}^2\|\nabla H\|_{L^2}^4\nonumber\\
&\le -2\int|\curl H|^2|H|^2dx+\frac18\|\curl^2H\|_{L^2}^2+C\|\nabla H\|_{L^2}^6
+C\|\nabla u\|_{L^2}^6+C\|\nabla H\|_{L^2}^4,
\end{align*}
which implies that
\begin{align}\label{w28}
&\frac{d}{dt}\|H\|_{L^4}^4+2\int|\curl H|^2|H|^2dx\le \frac18\|\curl^2H\|_{L^2}^2+C\|\nabla H\|_{L^2}^6
+C\|\nabla u\|_{L^2}^6+C\|\nabla H\|_{L^2}^4.
\end{align}

In view of $\eqref{a1}_3$, one deduces from Gagliardo--Nirenberg inequality, Young's inequality, and \eqref{a4} that
\begin{align*}
&\frac{d}{dt}\|{\rm curl}\,H\|_{L^2}^2+\|{\rm curl}^2H\|_{L^2}^2+\|H_t\|_{L^2}^2\nonumber\\
&=\int|H_t-\curl\curl H|^2dx=\int|H\cdot\nabla u-u\cdot\nabla H-H\div  u|^2dx\nonumber\\
&\le C\|\nabla u\|_{L^2}^2\|H\|_{L^\infty}^2+C\|u\|_{L^6}^2\|\nabla H\|_{L^3}^2\nonumber\\
&\le C\|\nabla u\|_{L^2}^2\|\nabla H\|_{L^2}\|{\rm curl}^2H\|_{L^2}+C\|\nabla u\|_{L^2}^2\|\nabla H\|_{L^2}^2\nonumber\\
&\le \frac18\|{\rm curl}^2H\|_{L^2}^2+C\big(\|\nabla u\|_{L^2}^2+\|\nabla u\|_{L^2}^4\big)\|\nabla H\|_{L^2}^2.
\end{align*}
This together with \eqref{w19} and \eqref{w28} yields that
\begin{align}\label{w30}
&\frac{d}{dt}\Psi(t)+\frac34\|\sqrt{\rho}\dot{u}\|_{L^2}^2
+\frac58\|{\rm curl}^2H\|_{L^2}^2+\|H_t\|_{L^2}^2+2D_2\int|\curl H|^2|H|^2dx\nonumber\\
&\le C\big(\|\nabla u\|_{L^3}^3+\|\nabla u\|_{L^2}^4+\|\nabla H\|_{L^2}^4+\|\nabla u\|_{L^2}^2+\|\nabla H\|_{L^2}^2
+\|\nabla H\|_{L^2}^6+\|\nabla u\|_{L^2}^6\big),
\end{align}
where
\begin{align*}
\Psi(t)&\triangleq\int\Big[\frac12\big((2\mu+\lambda)(\divv u)^2+\mu|\curl u|^2+2|\nabla H|^2\big)+D_2|H|^4\nonumber\\
&\quad-(P-\bar P)\divv u+H\otimes H:\nabla u-\frac12|H|^2\divv u\Big]dx.
\end{align*}
By \eqref{f12}, one has that
\begin{align}\label{w31}
\|\nabla u\|_{L^3}^3&\le C\big(\|\sqrt{\rho}\dot{u}\|_{L^2}+\|H\cdot\nabla H\|_{L^2}\big)^\frac{3}{2}\big(\|\nabla u\|_{L^2}+\|P-\bar P\|_{L^2}+\|H\|_{L^4}^2\big)^\frac32\nonumber\\
&\quad+C\big(\|\nabla u\|_{L^2}^3+\|P-\bar P\|_{L^3}^3+\|H\|_{L^6}^6\big)\nonumber\\
&\le  C\big(\|\sqrt{\rho}\dot{u}\|_{L^2}+\|H\cdot\nabla H\|_{L^2}\big)^\frac{3}{2}\big(\|\nabla u\|_{L^2}+\|\rho-\bar\rho_0\|_{L^2}+\|H\|_{L^4}^2\big)^\frac32\nonumber\\
&\quad+C\big(\|\nabla u\|_{L^2}^3+\|H\|_{L^6}^6
+\|\rho-\bar\rho_0\|_{L^2}^2\big)\nonumber\\
&\le \f14\|\sqrt{\rho}\dot{u}\|_{L^2}^2+C\big(\|H\cdot\nabla H\|_{L^2}^2
+\|\nabla u\|_{L^2}^6+\|H\|_{L^4}^{12}\big)\nonumber\\
&\quad+C\big(\|\nabla u\|_{L^2}^2+\|\nabla u\|_{L^2}^4+\|\nabla H\|_{L^2}^6\big)+C\|\rho-\bar\rho_0\|_{L^2}^2\nonumber\\
&\le \frac14\|\sqrt{\rho}\dot{u}\|_{L^2}^2+\frac18\|{\rm curl}^2H\|_{L^2}^2
+C\|\nabla H\|_{L^2}^4+C\|\nabla u\|_{L^2}^6\nonumber\\
&\quad+C\big(\|\nabla u\|_{L^2}^2+\|\nabla u\|_{L^2}^4+\|\nabla H\|_{L^2}^6\big)+C\|\rho-\bar\rho_0\|_{L^2}^2,
\end{align}
due to
\begin{align*}
\|P-\bar P\|_{L^3}\le C\|P-\bar P\|_{L^\infty}^\frac13\bigg(\int|P-\bar P|^2dx\bigg)^\frac13\le C(\hat{\rho})\|\rho-\bar\rho_0\|_{L^2}^\frac23.
\end{align*}
Plugging \eqref{w31} into \eqref{w30}, we arrive at
\begin{align*}
&\frac{d}{dt}\Psi(t)+\frac12\int\big(\rho|\dot u|^2+|\curl H|^2+|\curl^2H|^2+2|H_t|^2+4D_2|\curl H|^2|H|^2\big)dx\nonumber\\
&\le C\big(\|\nabla u\|_{L^2}^2+\|\nabla H\|_{L^2}^2
+\|\nabla H\|_{L^2}^6+\|\nabla u\|_{L^2}^6+\|\rho-\bar\rho_0\|_{L^2}^2\big)
\end{align*}
due to $\|(\nabla u,\nabla H)\|_{L^2}^4\leq C\big(\|(\nabla u,\nabla H)\|_{L^2}^2
+\|(\nabla u,\nabla H)\|_{L^2}^6\big)$.
By virtue of \eqref{a8}, \eqref{a9}, \eqref{w1}, and \eqref{w8}, one has that
\begin{align}\label{w33}
(\sqrt{\rho}\dot u, \ \nabla^2H, \ |\curl H||H|)\in L_{\loc}^2(0, \infty; L^2), \ \Psi(t)\in C[0, \infty),
\end{align}
which along with \eqref{w1} and \eqref{w8} implies that
\begin{align}\label{zzz}
&\int_0^\infty\int\Big[\frac12\big((2\mu+\lambda)(\divv u)^2+\mu|\curl u|^2+2|\nabla H|^2\big)+D_2|H|^4\nonumber\\
&\quad-(P-\bar P)\divv u+H\otimes H:\nabla u-\frac12|H|^2\divv u+D_2|\rho-\bar\rho_0|^2\Big]dxdt<\infty.
\end{align}

Next, for $t\ge 0$, we choose positive constants $D_2$ and $D_3$ and define the temporal energy functional
\begin{align}\label{w35}
\cm_2(t)&=D_3\cm_1(t)+\int\Big[\frac12\big((2\mu+\lambda)(\divv u)^2+\mu|\curl u|^2+2|\nabla H|^2\big)+D_2|H|^4\nonumber\\
&\quad-(P-\bar P)\divv u+H\otimes H:\nabla u-\frac12|H|^2\divv u+D_2|\rho-\bar\rho_0|^2\Big](t)dx.
\end{align}
Notice that $\cm_2(t)$ is equivalent to $\|(\rho-\bar\rho_0, \sqrt\rho u, \nabla u, \nabla H, |H|^2)(t)\|_{L^2}^2$ provided we choose $D_2$ and $D_3$ large enough. Fix a positive constant $\delta_1$ that may be small. In light of \eqref{w1}
and \eqref{zzz}, we conclude that there exists a positive constant $T_1>0$ such that
\begin{align*}
\cm_2(T_1)<\delta_1.
\end{align*}

Now we claim that, for any $t\ge T_1$,
\begin{align}\label{b35}
\Psi(t)+D_2\|(\rho-\bar\rho_0)(\cdot,t)\|_{L^2}^2<2\delta_1.
\end{align}
This implies that, for any $t\ge T_1$,
\begin{align}\label{b36}
\int\big[\mu|\nabla u|^2+(\mu+\lambda)(\divv u)^2+|\nabla H|^2+D_2|H|^4\big](t)dx<4\delta_1.
\end{align}
Let $\delta_1$ be small enough, by \eqref{w15} and \eqref{b35}, one has that, for any $t\ge T_1$,
\begin{align}\label{b37}
\frac{d}{dt}\cm_2(t)+\frac{\cm_2(t)}{D_3}+\frac{\|\sqrt\rho\dot{u}\|_{L^2}^2
+\|\curl^2H\|_{L^2}^2+\||H||\curl H|\|_{L^2}^2}{D_3}\le 0.
\end{align}
Integrating \eqref{b37} with respect to $t$ over $[T_1, \infty)$ gives \eqref{w17}.

It remains to prove \eqref{b35}. If \eqref{b35} does not hold, owing to \eqref{w33}, there exists a time $T_2>T_1$ such that
\begin{align}\label{b38}
\Psi(t)+D_2\|(\rho-\bar\rho_0)(\cdot,t)\|_{L^2}^2=2\delta_1.
\end{align}
Taking a minimal value of $T_2$ satisfying \eqref{b38}, then \eqref{b35} holds for any $T_1\le t<T_2$. Integrating \eqref{b37} from
$T_1$ to $T_2$ implies that
\begin{align*}
\cm_2(T_2)\le \cm_2(T_1)<\delta_1,
\end{align*}
which contradicts to \eqref{b38}.
\end{proof}

\begin{lemma}
Under the assumptions of Theorem \ref{thm1}, there exist two positive constant $C_5$ and $\eta_5$, which are dependent on
$M$, and $K$, but independent of $t$, such that for any $t\ge 0$,
\begin{align}\label{w41}
\sup_{t\ge 0}\big(\|\sqrt\rho\dot u\|_{L^2}^2+\|H_t\|_{L^2}^2\big)
+\int_0^\infty\big(\|\nabla\dot u\|_{L^2}^2+\|\nabla H_t\|_{L^2}^2\big)dt\le C,
\end{align}
and
\begin{align}\label{w42}
\sup_{t\ge 0}\big(\|\sqrt\rho\dot u\|_{L^2}^2+\|H_t\|_{L^2}^2\big)\le C_5e^{-\eta_5t}.
\end{align}
\end{lemma}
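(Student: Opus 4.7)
The plan is to derive a Lyapunov-type inequality for $\|\sqrt\rho\dot u\|_{L^2}^2+\|H_t\|_{L^2}^2$ and then exploit the exponential smallness of $\|(\nabla u,\nabla H)(\cdot,t)\|_{L^2}$ from \eqref{w17} to close the argument. Concretely, I would apply the operator $\partial_t+\divv(u\cdot)$ to the momentum equation in the form \eqref{w18}, multiply by $\dot u$, and integrate over $\Omega$; using the continuity equation $\eqref{a1}_1$, integration by parts, and \eqref{a8}, the leading terms produce $\frac12\frac{d}{dt}\int\rho|\dot u|^2 dx+(2\mu+\lambda)\|\divv\dot u\|_{L^2}^2+\mu\|\curl\dot u\|_{L^2}^2$ on the left, together with bulk cubic terms and boundary integrals on the right. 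The bulk terms, being products of $\nabla u$, $\nabla H$, $F$, $\omega$, $\dot u$, $H_t$ and $\rho-\bar\rho_0$, will be controlled using \eqref{f6}--\eqref{f13}, H\"older's inequality, and the decay \eqref{w17}.

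The main obstacle, as signposted after \eqref{a15}, consists of the boundary integrals of the form $\int_{\partial\Omega}F_t(\dot u\cdot n)dS$ and $\mu\int_{\partial\Omega}\curl u\cdot(\dot u\times n)dS$ arising when integration by parts is performed on the viscous part. The key observation is that \eqref{a15} rewrites $\dot u\cdot n$ on $\partial\Omega$ as $-u\cdot\nabla n\cdot u$, a purely spatial quadratic in $u$. This turns $\int_{\partial\Omega}F_t(\dot u\cdot n)dS$ into $-\int_{\partial\Omega}F_t(u\cdot\nabla n\cdot u)dS$, which after integration by parts in $t$ contributes $-\frac{d}{dt}\int_{\partial\Omega}F(u\cdot\nabla n\cdot u)dS$ (to be absorbed into the Lyapunov functional), plus remainder terms whose boundary factors are handled exactly as in \eqref{3.24} via \eqref{f8}, \eqref{z2.5}, the trace theorem, and the already-available bound \eqref{w17}. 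A parallel identity takes care of the $\curl u\times n$ boundary contribution.

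For the magnetic part, differentiating $\eqref{a1}_3$ in $t$ and testing against $H_t$, using $H_t\cdot n|_{\partial\Omega}=0$ together with \eqref{z2.5} and \eqref{f7}, yields $\frac12\frac{d}{dt}\|H_t\|_{L^2}^2+\|\curl H_t\|_{L^2}^2$ on the left and a remainder of schematic form $\int H_t\cdot\curl(u_t\times H+u\times H_t)dx$ on the right, controlled by Gagliardo--Nirenberg \eqref{f1}, H\"older, Young, and \eqref{w17}. Combining the two inequalities, I expect a bound of the form
\begin{align*}
\frac{d}{dt}\Phi(t)+\frac{1}{C}\Phi(t)+\|\nabla\dot u\|_{L^2}^2+\|\nabla H_t\|_{L^2}^2\le C\big(\|\nabla u\|_{L^2}^2+\|\nabla H\|_{L^2}^2\big)\Phi(t)+Ce^{-\eta_4 t},
\end{align*}
where $\Phi(t)$ is equivalent to $\|\sqrt\rho\dot u\|_{L^2}^2+\|H_t\|_{L^2}^2$ modulo the boundary correction $\int_{\partial\Omega}F(u\cdot\nabla n\cdot u)dS$ and lower-order quantities in $(\nabla u,\nabla H,H)$ whose decay is already known. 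Time integration combined with the Gronwall lemma and local regularity \eqref{a8} will then yield the uniform bound \eqref{w41}, while on the tail interval $[T_\ast,\infty)$ where $\|(\nabla u,\nabla H)\|_{L^2}$ is small, the coefficient multiplying $\Phi(t)$ on the right is integrable and exponentially decaying, giving \eqref{w42} by another application of Gronwall. The genuinely delicate step is the careful bookkeeping of boundary contributions so that every factor of $\dot u$ on $\partial\Omega$ is replaced via \eqref{a15} by a controllable bulk quantity, without sacrificing the coercive dissipation $\|\nabla\dot u\|_{L^2}^2$.
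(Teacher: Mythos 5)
Your proposal follows essentially the same route as the paper: applying $\dot u^j[\partial_t+\divv(u\,\cdot)]$ to the momentum equation, converting the troublesome boundary term $\int_{\partial\Omega}F_t(\dot u\cdot n)\,dS$ via \eqref{a15} into $-\frac{d}{dt}\int_{\partial\Omega}F(u\cdot\nabla n\cdot u)\,dS$ plus controllable remainders, testing the time-differentiated magnetic equation against $H_t$, and closing with a Lyapunov functional that absorbs the boundary correction and exploits the decay of $\|(\nabla u,\nabla H)\|_{L^2}$. The only cosmetic difference is that the paper folds $D_5\mathcal{M}_2(t)$ into the functional so that the inhomogeneous terms are absorbed by the dissipation of \eqref{b37} rather than left as an explicit $Ce^{-\eta_4 t}$ forcing, but both variants close the argument.
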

\begin{proof}[Proof]
Taking \eqref{f5}, we rewrite $\eqref{a1}_2$ as
\begin{align}\label{3.41}
\rho\dot{u}=\nabla F-\mu\curl{\rm curl}\,u+\div (H\otimes H).
\end{align}
Applying $\dot{u}^j[\partial/\partial t+\div (u\cdot)]$ to the $j$th-component of $\eqref{3.41}$,
 and then integrating the resulting equality over $\Omega$, we get that
\begin{align}\label{w44}
\frac12\frac{d}{dt}\int\rho|\dot{u}|^2dx
&=\int\big(\dot{u}\cdot\nabla F_t+\dot{u}^j{\rm div}\,(u\partial_jF)\big)dx
-\mu\int\big(\dot{u}\cdot\curl{\rm curl}\,u_t+\dot{u}^j{\rm div}\,((\curl{\rm curl}\,u)^ju)\big)dx\nonumber\\
&\quad+\int\big(\dot{u}\cdot{\rm div}\,(H\otimes H)_t+\dot{u}^j{\rm div}\,({\rm div}\,(H\otimes H^j)u)\big)dx
\triangleq\sum_{i=1}^3J_i.
\end{align}
We denote by $h\triangleq u\cdot(\nabla n+(\nabla n)^{tr})$ and $u^\bot\triangleq-u\times n$, then it deduces from Lemma \ref{l23} that
\begin{align}\label{w43}
&-\int_{\partial\Omega}F_t(u\cdot\nabla n\cdot u)dS\nonumber\\
&=-\frac{d}{dt}\int_{\partial\Omega}(u\cdot\nabla n\cdot u)FdS
+\int_{\partial\Omega}Fh\cdot\dot{u}dS-\int_{\partial\Omega}Fh\cdot(u\cdot\nabla u)dS\nonumber\\
&=-\frac{d}{dt}\int_{\partial\Omega}(u\cdot\nabla n\cdot u)FdS
+\int_{\partial\Omega}Fh\cdot\dot{u}dS-\int_{\partial\Omega}Fh^i(\nabla u^i\times u^\bot)\cdot ndS\nonumber\\
&=-\frac{d}{dt}\int_{\partial\Omega}(u\cdot\nabla n\cdot u)FdS
+\int_{\partial\Omega}Fh\cdot\dot{u}dS
+\int Fh^i\nabla\times u^\bot\cdot\nabla u^idx
-\int\nabla u^i\times u^\bot\cdot\nabla(Fh^i)dx\nonumber\\
&\le-\frac{d}{dt}\int_{\partial\Omega}(u\cdot\nabla n\cdot u)FdS
+C\|\nabla F\|_{L^2}\|u\|_{L^3}\|\dot{u}\|_{L^6}\nonumber\\
&\quad+C\big(\|F\|_{L^3}\|u\|_{L^6}\|\nabla\dot{u}\|_{L^2}
+\|F\|_{L^3}\|u\|_{L^6}\|\dot{u}\|_{L^2}
+\|F\|_{L^3}\|\nabla u\|_{L^2}\|\dot{u}\|_{L^6}\big)\nonumber\\
&\quad+C\big(\|\nabla u\|_{L^2}\|u\|_{L^6}^2\|\nabla F\|_{L^6}+\|\nabla u\|_{L^4}^2\|u\|_{L^6}\|F\|_{L^3}\big)\nonumber\\
&\le -\frac{d}{dt}\int_{\partial\Omega}(u\cdot\nabla n\cdot u)FdS\nonumber\\
&\quad+C\big(\|\rho\dot{u}\|_{L^2}+\|P-\bar P\|_{L^2}+\|\nabla u\|_{L^2}+\|H\|_{L^3}\|\nabla H\|_{L^6}+\|\nabla u\|_{L^2}^2
+\|H\|_{L^4}^2\big)\nonumber\\
&\quad\cdot\|\nabla u\|_{L^2}\big(\|\nabla\dot{u}\|_{L^2}+\|\nabla u\|_{L^2}^2+\|\nabla u\|_{L^4}^2\big)
+C\|\nabla u\|_{L^2}^3\|\nabla F\|_{L^6}\nonumber\\
&\le -\frac{d}{dt}\int_{\partial\Omega}(u\cdot\nabla n\cdot u)FdS
+C\|\nabla u\|_{L^2}^3\|\nabla F\|_{L^6}+\delta\|\nabla\dot{u}\|_{L^2}^2\nonumber\\
&\quad+C\big(\|\sqrt{\rho}\dot{u}\|_{L^2}^2\|\nabla u\|_{L^2}^2+\|\nabla u\|_{L^2}^6+\|\nabla u\|_{L^4}^4
+\|\nabla u\|_{L^2}^2
\big)\nonumber\\
&\quad+C\big(\|\nabla u\|_{L^2}^4+\|\nabla H\|_{L^2}^4+\|{\rm curl}^2H\|_{L^2}^2\|\nabla u\|_{L^2}^2\|\na H\|_{L^2}^2
+\|\nabla H\|_{L^2}^6\big),
\end{align}
due to
\begin{align*}
{\rm div}(\nabla u^i\times u^\bot)&=u^\bot\cdot\curl\nabla u^i-\nabla u^i\cdot\curl u^\bot=-\nabla u^i\cdot\curl u^\bot,\\
\|\dot{u}\|_{L^6}&\le C\big(\|\nabla\dot{u}\|_{L^2}+\|\nabla u\|_{L^2}^2\big),\\
\|\sqrt\rho\dot u\|_{L^2}&\le C\|\dot u\|_{L^2}\le C\|\nabla\dot u\|_{L^2},
\\
\|\nabla F\|_{L^6}&\le C(\|\rho\dot u\|_{L^6}+\|H\cdot\nabla H\|_{L^6})\nonumber\\
&\le C(\|\dot u\|_{L^6}+\|H\|_{L^\infty}\|\nabla H\|_{L^6})\nonumber\\
&\le C(\|\nabla\dot u\|_{L^2}+\|\nabla H\|_{L^2}^\frac12\|\curl^2H\|_{L^2}^\frac32+\|\nabla u\|_{L^2}^2+\|\na H\|_{L^2}^2),
\end{align*}
and
\begin{align}
\|\nabla F\|_{L^2}&\le C(\|\rho\dot u\|_{L^2}+\|H\cdot\nabla H\|_{L^2})\nonumber\\
&\le C\|\sqrt\rho\dot u\|_{L^2}+C\|H\|_{L^6}\|\nabla H\|_{L^3}\nonumber\\
&\le C\|\sqrt\rho\dot u\|_{L^2}+C\|\nabla H\|_{L^2}^\frac12\|\nabla H\|_{L^6}^\frac12\nonumber\\
&\le C\|\sqrt\rho\dot u\|_{L^2}+C\|\nabla H\|_{L^2}^\f12\|\curl^2 H\|_{L^2}^\f12+C\|\na H\|_{L^2}.
\end{align}
Thus, it follows from integration by parts, H\"older's inequality, Gagliardo--Nirenberg inequality, \eqref{a15}, \eqref{f8}, and \eqref{z2.5} that
\begin{align}\label{wz1}
J_1&=\int\big(\dot{u}\cdot\nabla F_t+\dot{u}^j\pa_i(\pa_jFu^i)\big)dx\nonumber\\
&=\int_{\pa\Omega}F_t\dot u\cdot ndS-\int F_t\divv\dot udx-\int u^i\pa_i\dot u^j\partial_jFdx\nonumber\\
&=-\int_{\pa\Omega}F_t(u\cdot\nabla n\cdot u)dS-(2\mu+\lambda)\int(\divv\dot u)^2dx
+(2\mu+\lambda)\int\divv\dot u\nabla u:\nabla udx\nonumber\\
&\quad-\gamma\int P\divv\dot u\divv udx+\int\divv\dot uu\cdot\nabla Fdx-\int u^i\pa_i\dot u^j\partial_jFdx
+\int\divv\dot u H\cdot H_tdx\nonumber\\
&\quad+\int\divv\dot u u\cdot\nabla H\cdot Hdx\nonumber\\
&\le -\int_{\pa\Omega}F_t(u\cdot\nabla n\cdot u)dS-(2\mu+\lambda)\|\divv\dot u\|_{L^2}^2+\delta\|\nabla\dot u\|_{L^2}^2
+C\|\nabla u\|_{L^4}^4\nonumber\\
&\quad+C\|u\|_{L^6}^2\|\nabla F\|_{L^3}^2+C\|\nabla u\|_{L^2}^2+C\|H_t\|_{L^3}^2\|H\|_{L^6}^2
+C\|u\|_{L^6}^2\|\nabla H\|_{L^6}^2\|H\|_{L^6}^2\nonumber\\
&\le -\int_{\pa\Omega}F_t(u\cdot\nabla n\cdot u)dS
-(2\mu+\lambda)\|\divv\dot u\|_{L^2}^2+\delta\|\nabla\dot u\|_{L^2}^2
+C\|\nabla u\|_{L^4}^4\nonumber\\
&\quad+C\|\nabla u\|_{L^2}^2\|\nabla F\|_{L^2}\|\nabla F\|_{L^6}
+C\|\nabla u\|_{L^2}^2+C\|H_t\|_{L^2}\|\nabla H_t\|_{L^2}\|\nabla H\|_{L^2}^2\nonumber\\
&\quad+C\|\nabla u\|_{L^2}^2\|\curl^2H\|_{L^2}^2\|\nabla H\|_{L^2}^2+C\|\nabla u\|_{L^2}^2\|\nabla H\|_{L^2}^4\nonumber\\
&\le -\int_{\pa\Omega}F_t(u\cdot\nabla n\cdot u)dS-(2\mu+\lambda)\|\divv\dot u\|_{L^2}^2+\delta\|\nabla\dot u\|_{L^2}^2
+\delta\|\na H_t\|_{L^2}^2\nonumber\\
&\quad+C\|H_t\|_{L^2}^2\|\na H\|_{L^2}^4+C\|\nabla u\|_{L^4}^4
+C\|\nabla u\|_{L^2}^2\|\nabla F\|_{L^2}\|\nabla F\|_{L^6}
+C\|\nabla u\|_{L^2}^2\nonumber\\
&\quad+C\|\curl^2H\|_{L^2}^2(\|\na u\|_{L^2}^4+\|\nabla H\|_{L^2}^4)\nonumber\\
&\le -\frac{d}{dt}\int_{\pa\Om}(u\cdot\na n\cdot u)FdS-(2\mu+\lambda)\|\divv\dot u\|_{L^2}^2+\delta\|\nabla\dot u\|_{L^2}^2
+\delta\|\na H_t\|_{L^2}^2\nonumber\\
&\quad
+C\big(\|\sqrt{\rho}\dot{u}\|_{L^2}^2\|\nabla u\|_{L^2}^2+\|\nabla u\|_{L^2}^6+\|\nabla u\|_{L^4}^4
+\|\nabla u\|_{L^2}^2
\big)\nonumber\\
&\quad+C\big(\|\nabla u\|_{L^2}^4+\|\nabla H\|_{L^2}^4
+\|\nabla H\|_{L^2}^6\big)+C\|\curl^2H\|_{L^2}^2(\|\na u\|_{L^2}^4+\|\nabla H\|_{L^2}^4)\nonumber\\
&\quad+C(\|\na u\|_{L^2}^3+\|\na u\|_{L^2}^2\|\na F\|_{L^2})\|\na F\|_{L^6}\nonumber\\
&\le -\frac{d}{dt}\int_{\pa\Om}(u\cdot\na n\cdot u)FdS-(2\mu+\lambda)\|\divv\dot u\|_{L^2}^2+\delta\|\nabla\dot u\|_{L^2}^2
+\delta\|\na H_t\|_{L^2}^2\nonumber\\
&\quad
+C\big(\|\sqrt{\rho}\dot{u}\|_{L^2}^2\|\nabla u\|_{L^2}^2+\|\nabla u\|_{L^2}^6+\|\nabla H\|_{L^2}^6+\|\nabla u\|_{L^4}^4
+\|\nabla u\|_{L^2}^2+\|\na H\|_{L^2}^2\big)\nonumber\\
&\quad+C\|\curl^2H\|_{L^2}^2(\|\na u\|_{L^2}^4+\|\nabla H\|_{L^2}^4+\|\na H\|_{L^2}^2)+C\|\na u\|_{L^2}^4\|\sqrt\r\dot u\|_{L^2}^2,
\end{align}
where we have used
\begin{align*}
\|\na u\|_{L^2}^3\|\na F\|_{L^6}
&\le \|\na u\|_{L^2}^3(\|\na \dot u\|_{L^2}+\|\na H\|_{L^2}^\f12\|\curl^2 H\|_{L^2}^\f32+\|\na u\|_{L^2}^2+\|\na H\|_{L^2}^2)\nonumber\\
&\le \delta\|\na\dot u\|_{L^2}^2+C\|\na u\|_{L^2}^6+C\|\na u\|_{L^2}^4\|\curl^2H\|_{L^2}^2+C\|\na u\|_{L^2}^4\notag \\
&\quad+C\|\na H\|_{L^2}^4
+C\|\na H\|_{L^2}^2,\\
\|\na u\|_{L^2}^2\|\na F\|_{L^2}\|\na F\|_{L^6}
&\le C\|\na u\|_{L^2}^2(\|\sqrt\r\dot u\|_{L^2}+\|\na H\|_{L^2}^\f12\|\curl^2H\|_{L^2}^\f12+\|\na H\|_{L^2})\nonumber\\
&\quad\times(\|\na\dot u\|_{L^2}+\|\na H\|_{L^2}^\f12\|\curl^2H\|_{L^2}^\f32+\|\na u\|_{L^2}^2+\|\na H\|_{L^2}^2)\nonumber\\
&\le \delta\|\na\dot u\|_{L^2}^2+C\|\na u\|_{L^2}^4\|\sqrt\r\dot u\|_{L^2}^2
+C\|\na u\|_{L^2}^4\|\curl^2H\|_{L^2}^2+C\|\na u\|_{L^2}^6\nonumber\\
&\quad+C\|\na H\|_{L^2}^6+C\|\na H\|_{L^2}^2\|\curl^2H\|_{L^2}^2
+C\|\na u\|_{L^2}^4+C\|\na H\|_{L^2}^4,
\end{align*}
and
\begin{align*}
F_{t}+u\cdot\nabla F&=(2\mu+\lambda)\divv u_t-P_t-H\cdot H_t
+(2\mu+\lambda)u\cdot\nabla\divv u-u\cdot\nabla P-u\cdot\nabla H\cdot H\nonumber\\
&=(2\mu+\lambda)\divv\dot{u}-(2\mu+\lambda)\divv(u\cdot\nabla u)+(2\mu+\lambda)u\cdot\nabla\divv u\nonumber\\
&\quad-(P_t+u\cdot\nabla P)-H\cdot H_t-u\cdot\nabla H\cdot H\nonumber\\
&=(2\mu+\lambda)\divv\dot{u}-(2\mu+\lambda)\partial_i(u^j\partial_ju^i)
+(2\mu+\lambda)u^j\partial_j\partial_iu^i\nonumber\\
&\quad+\gamma P\divv u-H\cdot H_t-u\cdot\nabla H\cdot H\nonumber\\
&=(2\mu+\lambda)\divv\dot{u}-(2\mu+\lambda)\partial_iu^j\partial_ju^i+\gamma P\divv u-H\cdot H_t-u\cdot\nabla H\cdot H.
\end{align*}

By direct calculations, we have
\begin{align}\label{wz2}
J_2&=-\mu\int \dot{u}\cdot(\curl{\rm curl}\,u_t)dx-\mu\int\dot{u}\cdot(\curl{\rm curl}\,u){\rm div}\,udx
-\mu\int u^i\dot{u}\cdot\curl(\partial_i{\rm curl}\,u)dx\nonumber\\
&=-\mu\int |{\rm curl}\,\dot{u}|^2dx+\mu\int {\rm curl}\,\dot{u}\cdot{\rm curl}\,(u\cdot\nabla u)dx+\mu\int ({\rm curl}\,u\times\dot{u})\cdot\nabla{\rm div}\,udx\nonumber\\
&\quad
-\mu\int {\rm div}\,u({\rm curl}\,u\cdot{\rm curl}\,\dot{u})dx-\mu\int u^i{\rm div}\,(\partial_i{\rm curl}\,u\times\dot{u})dx
-\mu\int u^i\partial_i{\rm curl}\,u\cdot{\rm curl}\,\dot{u}dx\nonumber\\
&=-\mu\int |{\rm curl}\,\dot{u}|^2dx+\mu\int {\rm curl}\,\dot{u}\partial_iu\times\nabla u^idx+\mu\int ({\rm curl}\,u\times\dot{u})\cdot\nabla{\rm div}\,udx\nonumber\\
&\quad
-\mu\int {\rm div}\,u({\rm curl}\,u\cdot{\rm curl}\,\dot{u})dx-\mu\int u^i{\rm div}\,(\partial_i{\rm curl}\,u\times\dot{u})dx\nonumber\\
&=-\mu\int |{\rm curl}\,\dot{u}|^2dx+\mu\int {\rm curl}\,\dot{u}\partial_iu\times\nabla u^idx+\mu\int ({\rm curl}\,u\times\dot{u})\cdot\nabla{\rm div}\,udx
\nonumber\\
&\quad-\mu\int {\rm div}\,u({\rm curl}\,u\cdot{\rm curl}\,\dot{u})dx-\mu\int  u\cdot\nabla{\rm div}\,({\rm curl}\,u\times\dot{u})dx
+\mu\int  u^i{\rm div}\,({\rm curl}\,u\times\partial_i\dot{u})dx\nonumber\\
&=-\mu\int |{\rm curl}\,\dot{u}|^2dx+\mu\int {\rm curl}\,\dot{u}\nabla_iu\times\nabla u^idx\nonumber\\
&\quad-\mu\int {\rm div}\,u({\rm curl}\,u\cdot{\rm curl}\,\dot{u})dx
-\mu\int \nabla u^i\cdot({\rm curl}\,u\times\partial_i\dot{u})dx\nonumber\\
&\le \delta \|\nabla\dot{u}\|_{L^2}^2+C \|\nabla u\|_{L^4}^4-\mu \|{\rm curl}\,\dot{u}\|_{L^2}^2,
\end{align}
due to
\begin{align*}
\curl(\dot{u}{\rm div}\,u)&={\rm div}\,u{\rm curl}\,\dot{u}+\nabla{\rm div}\,u\times\dot{u},\\
{\rm div}\,(\partial_i{\rm curl}\,u\times\dot{u})&=\dot{u}\cdot\curl(\partial_i{\rm curl}\,u)-\partial_i{\rm curl}\,u\cdot\curl\dot{u},\\
\int{\rm curl}\,u\cdot(\nabla{\rm div}\,u\times\dot{u})dx&=-\int({\rm curl}\,u\times\dot{u})\cdot\nabla{\rm div}\,udx,\\
\int{\rm curl}\,\dot{u}\cdot{\rm curl}\,(u\cdot\nabla u)dx&=\int{\rm curl}\,\dot{u}\cdot{\rm curl}\,(u^i\partial_iu)dx \\
& =\int{\rm curl}\,\dot{u}\big(u^i{\rm curl}\,\partial_iu+\partial_iu\cdot\nabla u^i\big)dx\nonumber\\
&=\int u^i\partial_i{\rm curl}\,u\cdot{\rm curl}\,\dot{u}dx+\int{\rm curl}\,\dot{u}\partial_iu\times\nabla u^idx,
\end{align*}
and
\begin{align*}
\int u\cdot\nabla{\rm div}\,({\rm curl}\,u\times\dot{u})dx
&=\int u^i\partial_i{\rm div}\,({\rm curl}\,u\times\dot{u})dx\nonumber\\
&=\int u^i\partial_i(\dot{u}\cdot\curl{\rm curl}\,u-{\rm curl}\,\dot{u}\cdot{\rm curl}\,u)dx\nonumber\\
&=\int u^i(\dot{u}\cdot\partial_i\curl{\rm curl}\,u-{\rm curl}\,\dot{u}\cdot\partial_i{\rm curl}\,u)dx\nonumber\\
&\quad+\int u^i(\partial_i\dot{u}\cdot\curl{\rm curl}\,u-\partial_i{\rm curl}\,\dot{u}\cdot{\rm curl}\,u)dx\nonumber\\
&=\int u^i{\rm div}\,(\partial_i{\rm curl}\,u\times\dot{u})dx+\int u^i{\rm div}\,({\rm curl}\,u\times\partial_i\dot{u})dx.
\end{align*}
By virtue of H\"older's, Young's, and Sobolev's inequalities, we have
\begin{align}\label{wz3}
J_3&=-\int\na\dot u:(H\otimes H)_tdx-\mu\int H^i\pa_iH^ju^k\pa_k\dot u^jdx\nonumber\\
&\le C(\|\na\dot u\|_{L^2}\|H\|_{L^6}\|H_t\|_{L^3}+\|\na\dot u\|_{L^2}\|H\|_{L^6}\|\nabla H\|_{L^6}\|u\|_{L^6})\nonumber\\
&\le \delta\|\na\dot u\|_{L^2}^2+\delta\|\na H_t\|_{L^2}^2+C\|\na H\|_{L^2}^4\|H_t\|_{L^2}^2
+C\|\na H\|_{L^2}^4+C\|\nabla u\|_{L^2}^4\nonumber\\
&\quad+C\|\curl^2H\|_{L^2}^2(\|\na u\|_{L^2}^4+\|\nabla H\|_{L^2}^4).
\end{align}
Plugging \eqref{wz1}--\eqref{wz3} into \eqref{w44}, we obtain after choosing $\delta$ suitably small that
\begin{align}\label{w51}
&\f{d}{dt}\|\sqrt\r\dot u\|_{L^2}^2+(2\mu+\lambda)\|\divv\dot u\|_{L^2}^2+\mu\|\curl\dot u\|_{L^2}^2\nonumber\\
&\le  -\frac{d}{dt}\int_{\pa\Om}(u\cdot\na n\cdot u)FdS+C\|\na u\|_{L^2}^4\|\sqrt\r\dot u\|_{L^2}^2+C\|\na H\|_{L^2}^4\|H_t\|_{L^2}^2\nonumber\\
&\quad+C\big(\|\sqrt{\rho}\dot{u}\|_{L^2}^2\|\nabla u\|_{L^2}^2+\|\nabla u\|_{L^2}^6+\|\nabla H\|_{L^2}^6+\|\nabla u\|_{L^4}^4
+\|\nabla u\|_{L^2}^2+\|\na H\|_{L^2}^2\big)\nonumber\\
&\quad+C\|\curl^2H\|_{L^2}^2(\|\na u\|_{L^2}^4+\|\nabla H\|_{L^2}^4+\|\na H\|_{L^2}^2).
\end{align}

Note that
\begin{align}\label{z3.36}
\begin{cases}
H_{tt}-\curl{\rm curl}\,H_t=\big(H\cdot\nabla u-u\cdot\nabla H-H\cdot\nabla u\big)_t \quad&{\rm in}~\Omega,\\
H_t\cdot n=0, \quad {\rm curl}\,H_t\times n=0 \quad&{\rm on}~\partial\Omega.
\end{cases}
\end{align}
Multiplying \eqref{z3.36}$_1$ by $H_t$ and integration by parts, we find that
\begin{align}\label{w53}
\frac{d}{dt}\|H_t\|_{L^2}^2+\|{\rm curl}\,H_t\|_{L^2}^2
&=\int (H_t\cdot\nabla u-u\cdot\nabla H_t-H_t{\rm div}\,u)\cdot H_tdx\nonumber\\
&\quad+\int (H\cdot\nabla\dot{u}-\dot{u}\cdot\nabla H-H{\rm div}\,\dot{u})\cdot H_tdx\nonumber\\
&\quad-\int (H\cdot\nabla(u\cdot\nabla u)-u\cdot\nabla u\cdot\nabla H-H{\rm div}\,(u\cdot\nabla u))\cdot H_tdx
\triangleq\sum_{i=1}^3R_i.
\end{align}
One gets from H\"older's inequality, Young's inequality, Gagliardo--Nirenberg inequality, \eqref{a15}, and \eqref{z2.5} that
\begin{align*}
R_1&\le C(\|H_t\|_{L^3}\|H_t\|_{L^6}\|\na u\|_{L^2}+\|u\|_{L^6}\|H_t\|_{L^3}\|\na H_t\|_{L^2})\nonumber\\
&\le C\|H_t\|_{L^2}^\f12\|\na H_t\|_{L^2}^\f32\|\nabla u\|_{L^2}\nonumber\\
&\le \delta\|\na H_t\|_{L^2}^2+C\|\na u\|_{L^2}^4\|H_t\|_{L^2}^2,\\
R_2&\le C \|H\|_{L^6}\|H_t\|_{L^3}\|\nabla\dot{u}\|_{L^2}+C \|\dot{u}\|_{L^6}\|\nabla H\|_{L^2}\|H_t\|_{L^3}\nonumber\\
&\le C \big(\|\nabla\dot{u}\|_{L^2}
+\|\nabla u\|_{L^2}^2\big)\|\nabla H\|_{L^2}\|H_t\|_{L^2}^\frac12\|\nabla H_t\|_{L^2}^\frac12\nonumber\\
&\le \delta \big(\|\nabla\dot{u}\|_{L^2}^2+\|\nabla H_t\|_{L^2}^2\big)+C \|\nabla u\|_{L^2}^4
+C \|\nabla H\|_{L^2}^4\|H_t\|_{L^2}^2,\\
R_3&=-\int_{\partial\Omega} H\cdot H_t(u\cdot\nabla u\cdot n)dS+\int H\cdot\nabla H_t\cdot(u\cdot\nabla u)dx
-\int (u\cdot\nabla u)\cdot\nabla H\cdot H_tdx\nonumber\\
&\quad+\int u\cdot\nabla u\cdot\nabla H_t\cdot Hdx+\int u\cdot\nabla u\cdot\nabla H\cdot H_tdx\nonumber\\
&\le \int_{\partial\Omega} H\cdot H_t(u\cdot\nabla n\cdot u)dS
+2 \|H\|_{L^{12}}\|\nabla H_t\|_{L^2}\|u\|_{L^6}\|\nabla u\|_{L^4}\nonumber\\
&\le \int_{\partial\Omega} H\cdot H_t(u\cdot\nabla n\cdot u)dS
+\delta \|\nabla H_t\|_{L^2}^2+C \|\nabla u\|_{L^2}^4\|{\rm curl}^2H\|_{L^2}^2\nonumber\\
&\quad+C \|\nabla u\|_{L^4}^4+C \big(\|\nabla u\|_{L^2}^4+\|\nabla H\|_{L^2}^4\big),
\end{align*}
where we have used
\begin{align*}
\|H\|_{L^{12}}&\le C\||H|^2\|_{H^1}^\f12\le C\|H\|_{L^4}+C\|\nabla |H|^2\|_{L^2}^{\frac12} \\
& \le C\|\nabla H\|_{L^2}+C\|H\|_{L^6}^{\frac12}\|\nabla H\|_{L^3}^{\frac12} \\
& \le C\|\nabla H\|_{L^2}+C\|H\|_{L^6}^{\frac12}\|\nabla H\|_{L^2}^\frac14\|\nabla H\|_{L^6}^\frac14 \\
& \le C\|\nabla H\|_{L^2}+C\|\nabla H\|_{L^2}^\frac14\|{\rm curl}^2H\|_{L^2}^\frac14.
\end{align*}
Thus, substituting the above estimates on $R_i\ (i=1, 2, 3)$ into \eqref{w53} shows that
\begin{align}\label{w54}
\frac{d}{dt}\|H_t\|_{L^2}^2+\|{\rm curl}\, H_t\|_{L^2}^2
&\le \int_{\partial\Omega}(H\cdot H_t)(u\cdot\nabla n\cdot u)dS+ C\tilde\delta \big(\|\nabla\dot{u}\|_{L^2}^2+\|\nabla H_t\|_{L^2}^2\big)
+C \|\nabla H\|_{L^2}^4\|H_t\|_{L^2}^2\nonumber\\
&\quad+C \|\nabla u\|_{L^4}^4+C \|\nabla u\|_{L^2}^4\|{\rm curl}^2H\|_{L^2}^2+C \big(\|\nabla u\|_{L^2}^4+\|\nabla H\|_{L^2}^4\big).
\end{align}
By Lemma \ref{l23}, H{\"o}lder's inequality, Sobolev's inequality, and \eqref{z2.5}, we find that
\begin{align*}
\left|\int_{\partial\Omega}(H\cdot H_t)(u\cdot\nabla n\cdot u)dS\right|
&\le C\int_{\partial\Omega}|u|^2|H||H_t|dS\le C\||u|^2|H||H_t|\|_{W^{1,1}} \\
&\le C\big(\|u\|_{L^4}^2\|H\|_{L^3}\|H_t\|_{L^6}
+\|u\|_{L^6}\|\nabla u\|_{L^2}\|H\|_{L^6}\|H_t\|_{L^6}\big) \\
&\quad+C\big(\|u\|_{L^6}^2\|\nabla H_t\|_{L^2}\|H\|_{L^6}+\|u\|_{L^6}^2\|\nabla H\|_{L^2}\|H_t\|_{L^6}\big) \\
&\le C\|u\|_{H^1}^2\|H\|_{H^1}\|H_t\|_{H^1}
\le C\|\nabla u\|_{L^2}^2\|\nabla H\|_{L^2}\|\nabla H_t\|_{L^2} \\
&\le \tilde\delta\|\nabla H_t\|_{L^2}^2+C\|\nabla u\|_{L^2}^4\|\nabla H\|_{L^2}^2,
\end{align*}
which along with \eqref{w54} implies that
\begin{align}\label{z6}
\frac{d}{dt}\|H_t\|_{L^2}^2+\|{\rm curl}\, H_t\|_{L^2}^2
&\le C\tilde\delta  \big(\|\nabla\dot{u}\|_{L^2}^2+\|\nabla H_t\|_{L^2}^2\big)
+C \|\nabla H\|_{L^2}^4\|H_t\|_{L^2}^2+C \|\nabla u\|_{L^2}^4\|{\rm curl}^2H\|_{L^2}^2\nonumber\\
&\quad+C \big(\|\nabla u\|_{L^2}^4+\|\nabla H\|_{L^2}^4\big)
+C \|\nabla u\|_{L^4}^4+C \|\nabla u\|_{L^2}^4\|\nabla H\|_{L^2}^2.
\end{align}
Hence, by choosing $\tilde\delta$ suitably small, one obtains from \eqref{w51} and \eqref{z6} that
\begin{align}\label{w55}
&\frac{d}{dt}\lb\|\sqrt\r\dot u\|_{L^2}^2+\|H_t\|_{L^2}^2+\int_{\pa\Om}(u\cdot\na n\cdot u)FdS
\rb+\|\na\dot u\|_{L^2}^2+\|{\rm curl}\, H_t\|_{L^2}^2\nonumber\\
&\le C\|\na u\|_{L^2}^4\|\sqrt\r\dot u\|_{L^2}^2+C\|\na H\|_{L^2}^4\|H_t\|_{L^2}^2
+C\big(\|\nabla u\|_{L^2}^4+\|\nabla H\|_{L^2}^4
\big)\nonumber\\
&\quad+C\big(\|\sqrt{\rho}\dot{u}\|_{L^2}^2\|\nabla u\|_{L^2}^2+\|\nabla u\|_{L^2}^6+\|\nabla H\|_{L^2}^6+\|\nabla u\|_{L^4}^4
+\|\nabla u\|_{L^2}^2+\|\na H\|_{L^2}^2\big)\nonumber\\
&\quad+C\|\curl^2H\|_{L^2}^2\big(\|\na u\|_{L^2}^4+\|\nabla H\|_{L^2}^4+\|\na H\|_{L^2}^2\big)\nonumber\\
&\le C\big(\|\sqrt\rho\dot u\|_{L^2}^2+\|H_t\|_{L^2}^2+\|\na u\|_{L^2}^2+\|\na H\|_{L^2}^2\big)\big(\|\sqrt\rho\dot u\|_{L^2}^2+\|H_t\|_{L^2}^2\big)\nonumber\\
&\quad+C\big(\|\nabla u\|_{L^2}^2+\|\na H\|_{L^2}^2+\|\rho-\bar\rho_0\|_{L^2}^2\big),
\end{align}
where we have used the following facts
\begin{align}
\|\na u\|_{L^4}^4&\le C\big(\|\r\dot u\|_{L^2}+\|H\cdot\na H\|_{L^2}\big)^3\big(\|\na u\|_{L^2}+\|P-\bar P\|_{L^2}+\|H\|_{L^4}^2\big)\nonumber\\
&\quad+C\big(\|\na u\|_{L^2}+\|P-\bar P\|_{L^4}+\||H|^2\|_{L^4}\big)^4\nonumber\\
&\le C\|\sqrt\r\dot u\|_{L^2}^4+\|H\|_{L^3}^4\|\na H\|_{L^6}^4+C\|\vr\|_{L^2}^2+C\|H\|_{L^3}^4\|H\|_{L^6}^4+C\|\na u\|_{L^2}^4
+C\|H\|_{L^\infty}^4\|H\|_{L^4}^4\nonumber\\
&\le C\big(\|\sqrt\r\dot u\|_{L^2}^4+\|\curl^2H\|_{L^2}^4+\|\na u\|_{L^2}^4+\|\na H\|_{L^2}^4+\|\rho-\bar\rho_0\|_{L^2}^2\big)
\nonumber\\
&\quad+C\|\curl^2H\|_{L^2}^2\|\na H\|_{L^2}^4,
\\ \label{w57}
\|\curl^2H\|_{L^2}&\le C\big(\|H_t\|_{L^2}+\|H\|_{L^\infty}\|\na u\|_{L^2}+\|\na H\|_{L^3}\|u\|_{L^6}\big)\nonumber\\
&\le C\big(\|H_t\|_{L^2}^2+\|\na H\|_{L^2}^\f12\|\curl^2H\|_{L^2}^\f12\|\na u\|_{L^2}
+\|\nabla H\|_{L^2}\|\na u\|_{L^2}\big)\nonumber\\
&\quad+C\|\na H\|_{L^2}^\f12\|\curl^2H\|_{L^2}^\f12\|\nabla u\|_{L^2}\nonumber\\
&\le \f12\|\curl^2H\|_{L^2}+C\big(\|H_t\|_{L^2}^2+\|\nabla H\|_{L^2}\|\na u\|_{L^2}^2+\|\nabla H\|_{L^2}\|\na u\|_{L^2}\big),
\end{align}
due to Lemma \ref{l24}, \eqref{f2}, and $\eqref{a1}_3$.

It follows from \eqref{f7} and \eqref{z2.5} that
\begin{align}\label{w59}
&\int_{\partial\Omega}(u\cdot\nabla n\cdot u)FdS\le C\||u|^2|F|\|_{W^{1,1}}\le C\|\nabla u\|_{L^2}^2\|F\|_{H^1}\nonumber\\
&\le \frac{1}{2}\|\sqrt{\rho}\dot{u}\|_{L^2}^2+\tilde\delta\|{\rm curl}^2H\|_{L^2}^2
+C\big(\|\nabla u\|_{L^2}^4+\|\nabla H\|_{L^2}^4\big)
+C\big(\|\nabla u\|_{L^2}^2+\|\nabla H\|_{L^2}^2\big)\nonumber\\
&\le \f12(\|\sqrt\r\dot u\|_{L^2}^2+\|H_t\|_{L^2}^2)+C\big(\|\nabla u\|_{L^2}^2+\|\nabla H\|_{L^2}^2\big).
\end{align}
Choose a positive constant $D_4$ and define a temporal energy functional
\begin{align*}
\cm_3(t)\triangleq D_5\cm_2(t)+\|\sqrt{\rho}\dot{u}(\cdot,t)\|_{L^2}^2+\|H_t(\cdot,t)\|_{L^2}^2
++\int_{\pa\Om}(u\cdot\na n\cdot u)FdS
+D_4\|(\rho-\bar\rho_0)(\cdot,t)\|_{L^2}^2.
\end{align*}
Note that $\cm_3(t)$ is equivalent to $\|(\rho-\bar\rho_0, \nabla u,
\nabla H, \curl^2H, \sqrt{\rho}\dot u, H_t)\|_{L^2}^2$ provided $D_4$ and $D_5$ are chosen large enough. In light of \eqref{b37}, we get that, for any $t\ge 0$,
\begin{align}\label{w58}
\frac{d}{dt}\cm_3(t)+\frac{\cm_3(t)}{D_5}+\frac{\mu\|\nabla\dot u\|_{L^2}^2+\|\nabla H_t\|_{L^2}^2}{D_5}\le 0.
\end{align}
Integrating \eqref{w58} with respect to $t$ over $[0, \infty)$ together with \eqref{w59} leads to \eqref{w42}.
\end{proof}

\begin{lemma}\label{l34}
Under the conditions of Theorem \ref{thm1}, there exist two positive constants $C_6$ and $\eta_6$, which are dependent on
$K$ and $M$, but independent of $t$, such that for any $t\ge 0$,
\begin{align}\label{w61}
\|(\rho-\bar\rho_0)(\cdot, t)\|_{L^\infty}\le C_6e^{-\eta_6t}.
\end{align}
\end{lemma}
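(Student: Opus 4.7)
The plan is to prove \eqref{w61} in two stages. First, I would establish a time-independent positive lower bound $\rho(x,t)\ge\hat\rho_*>0$, which crucially uses the hypothesis $\inf_\Omega\rho_0\ge\rho_*>0$. To this end, rewrite the continuity equation $\eqref{a1}_1$ in logarithmic Lagrangian form $\frac{D}{Dt}\log\rho = -\divv u$ and substitute the effective viscous flux identity $\divv u = (2\mu+\lambda)^{-1}(F+(P-\bar P)+\tfrac12|H|^2)$ from \eqref{f5}. The propagation formula $\log\rho(X(t;x),t) = \log\rho_0(x) - \int_0^t\divv u(X(s;x),s)\,ds$ then reduces the task to bounding $\int_0^\infty\|\divv u(\cdot,t)\|_{L^\infty}\,dt$ by a constant depending only on $K$, $\rho_*$, and $\hat\rho$. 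This is to be achieved by combining Lemma~\ref{l24} with the exponential $L^2$-decay of $\|\sqrt\rho\dot u\|_{L^2}$, $\|H_t\|_{L^2}$, $\|\nabla u\|_{L^2}$, $\|\nabla H\|_{L^2}$, and $\|\rho-\bar\rho_0\|_{L^2}$ from Lemmas~3.1--3.3, together with the time integrability $\int_0^\infty(\|\nabla\dot u\|_{L^2}^2+\|\nabla H_t\|_{L^2}^2)\,dt<\infty$ from \eqref{w41}, and the Sobolev embedding $W^{1,q}\hookrightarrow L^\infty$ for some $q\in(3,6]$, following the strategy of \cite{WZ24,LWZ25} adapted to the Navier-slip MHD setting.

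Once the two-sided bound $\hat\rho_*\le\rho\le\hat\rho$ is in hand, I would exploit the damping structure of the continuity equation. Using $\divv u = (2\mu+\lambda)^{-1}(F+(P-\bar P)+\tfrac12|H|^2)$ together with the mean-value identity $P(\rho)-P(\bar\rho_0) = \gamma\theta^{\gamma-1}(\rho-\bar\rho_0)$ for some $\theta\in[\hat\rho_*,\hat\rho]$, the continuity equation becomes
\begin{equation*}
\frac{D}{Dt}(\rho-\bar\rho_0) + \frac{\rho\,\gamma\theta^{\gamma-1}}{2\mu+\lambda}(\rho-\bar\rho_0) = -\frac{\rho}{2\mu+\lambda}\Bigl(F + \tfrac12|H|^2 + P(\bar\rho_0)-\bar P\Bigr),
\end{equation*}
whose left-hand coefficient is bounded below by a positive constant $\kappa_0=\kappa_0(\hat\rho_*,\hat\rho,\gamma,\mu,\lambda)>0$. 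Reading this equation along the characteristics $X(t;x)$ yields the pointwise differential inequality
\begin{equation*}
\frac{d}{dt}|(\rho-\bar\rho_0)(X(t;x),t)| + \kappa_0\,|(\rho-\bar\rho_0)(X(t;x),t)| \le C\bigl(\|F(\cdot,t)\|_{L^\infty}+\|H(\cdot,t)\|_{L^\infty}^2+|P(\bar\rho_0)-\bar P(t)|\bigr).
\end{equation*}

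Finally, I would verify that the right-hand side decays exponentially in $t$. The estimate $|P(\bar\rho_0)-\bar P(t)|\le C\|\rho-\bar\rho_0\|_{L^2}\le Ce^{-\eta_3 t}$ follows at once from \eqref{w1} together with the mean-value theorem and the two-sided density bound, while $\|F\|_{L^\infty}$ and $\|H\|_{L^\infty}^2$ decay exponentially by interpolating the $L^2$-exponential decay from Lemmas~3.1--3.3 against the uniform higher-order control implied by \eqref{w41} and Lemma~\ref{l24}, again via the Sobolev embedding into $W^{1,q}$ with $q\in(3,6]$. Grönwall's inequality applied to the pointwise ODE and then a supremum in $x\in\Omega$ yield \eqref{w61}. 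The main technical obstacle is the first stage: although all lower-order norms decay exponentially in $L^2$, promoting this to time-integrable $L^\infty$ control of $\divv u$ uniformly in $t$ requires a delicate combination of Lemma~\ref{l24}, elliptic regularity for the effective viscous flux, and the time integrability \eqref{w41}, in order to ensure all constants remain independent of $t$.
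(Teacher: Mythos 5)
Your second and third stages (the damped Lagrangian ODE for $\rho-\bar\rho_0$ with exponentially small forcing, closed by Gr\"onwall along characteristics) are essentially the paper's argument, up to the minor caveat that $\|F\|_{L^\infty}$ is not shown to decay pointwise exponentially but only as $C(1+\|\nabla\dot u\|_{L^2})^{1/2}e^{-\eta_1 t/2}$ with $\|\nabla\dot u\|_{L^2}\in L^2_t$ by \eqref{w41}; this still suffices in the Duhamel integral via Cauchy--Schwarz, as in \eqref{w68}.

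The genuine gap is in your first stage. You propose to obtain the uniform lower bound on $\rho$ by bounding $\int_0^\infty\|\divv u\|_{L^\infty}\,dt$, but $\divv u=(2\mu+\lambda)^{-1}\bigl(F+(P-\bar P)+\tfrac12|H|^2\bigr)$ contains the pressure fluctuation, and $\|P-\bar P\|_{L^\infty}$ is controlled (up to the harmless $|P(\bar\rho_0)-\bar P|$ term) precisely by $\|\rho-\bar\rho_0\|_{L^\infty}$ --- the quantity whose decay the lemma asserts. Before the lemma is proved, the only available bound is $\|P-\bar P\|_{L^\infty}\le C(\hat\rho)$ from \eqref{a9}, which yields $\int_0^t\|\divv u\|_{L^\infty}\,d\tau\le C(t+1)$ and hence only the time-degenerate bound $\rho\ge\rho_*e^{-C(t+1)}$; the $L^2$ decay of $\rho-\bar\rho_0$ cannot be interpolated up to $L^\infty$ because no uniform-in-time spatial regularity of $\rho$ is available. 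So this stage is circular as written. What is actually time-integrable in $L^\infty$ at this point is only $\|F\|_{L^\infty}+\tfrac12\||H|^2\|_{L^\infty}$, via \eqref{3.53} and \eqref{w41} (see \eqref{3.51}). The paper escapes the circularity by \emph{not} integrating the pressure term in $L^\infty$: it keeps $\rho(\rho^\gamma-\bar\rho_0^\gamma)$ on the left of the Lagrangian equation \eqref{3.52}, where it has a favorable sign when $\rho$ is small, and runs a continuity/contradiction argument --- if $\rho$ along some particle path dropped from a small value $c_0\le\bar\rho_0/2^{1/\gamma}$ to $c_0/2$ over $[T_2,T_3]$, then the left-hand side of \eqref{b3.53} would be $\le-\tfrac{c_0(2\mu+\lambda)}{2}-\tfrac{c_0}{4}\bar\rho_0^\gamma(T_3-T_2)$ while the right-hand side is $\ge-c_0\delta$, a contradiction for $\delta$ small. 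You need this (or an equivalent sign-exploiting mechanism) to get the time-independent lower bound; once that is in place, the remainder of your plan goes through.
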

\begin{proof}
It follows from Sobolev's inequality, Lemma \ref{l24}, \eqref{w42}, \eqref{w1}, and \eqref{w17} that
\begin{align}\label{3.53}
\|F\|_{L^\infty}^2+\|H\|_{L^\infty}^4&\le C\big(\|F\|_{H^1}\|\nabla F\|_{L^6}+\|F\|_{L^2}^2+\|H\|_{L^\infty}^4\big)\nonumber\\
&\le C\big(\|\nabla u\|_{L^2}+\|\na H\|_{L^2}+\|\curl^2H\|_{L^2}+\|\sqrt\r\dot u\|_{L^2}+\|\rho-\bar\rho_0\|_{L^2}\big)\nonumber\\
&\quad\times\big(\|\nabla\dot u\|_{L^2}+\|\curl^2H\|_{L^2}^2+\|\nabla u\|_{L^2}^2+\|\na H\|_{L^2}^2\big)\nonumber\\
&\quad+C\big(\|\nabla u\|_{L^2}^2+\|\na H\|_{L^2}^2+\|\curl^2H\|_{L^2}^4+\|\na H\|_{L^2}^4+\|\rho-\bar\rho_0\|_{L^2}^2\big)\nonumber\\
&\le C\big(\|\nabla u\|_{L^2}+\|\na H\|_{L^2}+\|\curl^2H\|_{L^2}+\|\sqrt\r\dot u\|_{L^2}+\|\rho-\bar\rho_0\|_{L^2}\big)\|\na\dot u\|_{L^2}\nonumber\\
&\quad+C\big(\|\nabla u\|_{L^2}^2+\|\na H\|_{L^2}^2+\|\curl^2H\|_{L^2}^2+\|\sqrt\r\dot u\|_{L^2}^2+\|\rho-\bar\rho_0\|_{L^2}^2\big)\nonumber\\
&\quad+C\big(\|\nabla u\|_{L^2}^4+\|\na H\|_{L^2}^4+\|\curl^2H\|_{L^2}^4\big)\nonumber\\
&\le C(1+\|\na\dot u\|_{L^2})e^{-\eta_1t},
\end{align}
which along with \eqref{w41} implies that, for any $\delta>0$, there exists a positive constant $T_1$ such that
\begin{equation}\label{3.51}\int_{T_1}^\infty
\Big(\|F\|_{L^\infty}+\frac{1}{2}\||H|^2\|_{L^\infty}\Big)dt\le \delta.
\end{equation}

Let $X(t, y)$ be the particle path given by
\begin{align}\label{lz1}
\begin{cases}
\f{d}{dt}X(t, y)=u(t, X(t, y)),\\
X(0, y)=y,
\end{cases}
\end{align}
then we rewrite the mass conservation equation \eqref{a1}$_1$ as
\begin{align*}
\frac{\mathrm{d}}{\mathrm{d}t}\ln \rho=-\divv  u.
\end{align*}
By the definition of $F$ in \eqref{f5}, \eqref{a9}, and \eqref{3.53}, we have
\begin{align*}
\rho(t, x)\ge \rho_0\text{e}^{{-\int_0^t}\|\divv  u\|_{L^\infty}\mathrm{d}\tau}\ge \rho_*\text{e}^{{-\int_0^t}C\|(P-\bar P,F,H^2)\|_{L^\infty}\mathrm{d}\tau}\ge \rho_*\text{e}^{-C(t+1)}.
\end{align*}
Motivated by \cite{LWZ25}, we claim that there exists a positive constant $c_0$ such that
\begin{equation}
\rho(t, x)\ge c_0.\label{3.50}
\end{equation}
Indeed, if \eqref{3.50} is false, there exists a time $T_2\ge T_1$ such
that $0<c_0=\rho({ x}(T_2),T_2)\le \frac{\bar\rho_0}{2^\frac{1}{\gamma}}$, then
choose a minimal value of $T_3>T_2$ such that $\rho({ x}(T_3),T_3)=\frac{c_0}{2}$.
Thus, $\rho({ x}(t),t)\in[\frac{c_0}{2},c_0]$ for $t\in[T_2,T_3]$.
Denote by $\vr\triangleq\rho-\bar\rho_0$, then $\eqref{a1}_1$ can be rewritten as
\begin{align}\label{3.52}
\f{d}{dt}\vr(t, X(t, y))+\frac{1}{2\mu+\lambda}\big(\rho(\r^\gamma-\bar\rho_0^\gamma)\big)(t, X(t, y))=
-\frac{1}{2\mu+\lambda}\bigg[\r\Big(F+\frac12|H|^2\Big)\bigg](t, X(t, y)).
\end{align}
Integrating \eqref{3.52} along particle
trajectories from  $T_2$ to $T_3$,
abbreviating $\rho(t, X(t, y))$ by $\rho(t)$ for convenience, we get that
\begin{equation}\label{b3.53}
(2\mu+\lambda)(\rho-\bar\rho_0)\Big{|}^{T_3}_{T_2}+\int^{T_3}_{T_2}
\rho(\rho^\gamma-\bar\rho_0^\gamma)\mathrm{d}\tau=-\int^{T_3}_{T_2}\bigg[\r\Big(F+\frac12|H|^2\Big)\bigg](t, X(\tau, y))\mathrm{d}\tau.
\end{equation}
We deal with the terms on the left-hand side of \eqref{b3.53} as follows
\begin{align}
(2\mu+\lambda)(\rho-\bar\rho_0)\Big{|}^{T_3}_{T_2}+\int^{T_3}_{T_2}\rho(\rho^\gamma
-\bar\rho_0^\gamma)\mathrm{d}\tau
&\le -\frac{c_0(2\mu+\lambda)}{2}+\int^{T_3}_{T_2}\frac{c_0}{2}
\bigg(\frac{\bar\rho_0^\gamma}{2}-\bar\rho_0^\gamma\bigg)\mathrm{d}\tau\nonumber
\\&= -\frac{c_0(2\mu+\lambda)}{2}-\frac{c_0}{4}(T_3-T_2)\bar\rho_0^\gamma.\label{3.54}
\end{align}
In virtue of \eqref{3.51}, we estimate the term on the right-hand side of \eqref{b3.53} as follows
\begin{align}\label{3.55}
-\int^{T_3}_{T_2}\bigg[\r\Big(F+\frac12|H|^2\Big)\bigg](t, X(\tau, y))\mathrm{d}\tau\ge -c_0\delta.
\end{align}
Substituting \eqref{3.54} and \eqref{3.55} into \eqref{b3.53}, we obtain that
\begin{align*}
-\frac{c_0(2\mu+\lambda)}{2}-\frac{c_0\bar\rho_0^\gamma}{4}(T_3-T_2)\ge -c_0\delta,
\end{align*}
which is impossible provided $\delta$ is small enough.

Multiplying \eqref{3.52} by $\vr$ and using \eqref{3.50} and \eqref{w9}, we obtain that there exist two positive constants $\eta_7$ and $C$ such that
\begin{align}\label{w65}
\f{d}{dt}\vr^2(t, X(t, y))+\eta_7\vr^2(t, X(t, y))\le C\big(\|F\|_{L^\infty}^2+\|H\|_{L^\infty}^4\big),
\end{align}
which together with \eqref{3.53} implies that
\begin{align}
\f{d}{dt}\vr^2(t, X(t, y))+\eta_7\vr^2(t, X(t, y))\le C(1+\|\na\dot u\|_{L^2})e^{-\eta_1t}.
\end{align}
Hence, one has that
\begin{align}\label{w68}
\frac{d}{dt}(e^{\eta_7t}\vr(t, X(t, y))^2)\le Ce^{\eta_7t}\big(1+\|\nabla\dot u\|_{L^2}\big)e^{-\eta_1t}.
\end{align}
Integrating \eqref{w68} along particle trajectories from $0$ to $t$, we derive from H\"older's inequality and \eqref{w41} that
\begin{align*}
\|\rho-\bar\rho_0\|_{L^\infty}^2&\le Ce^{-\eta_7t}+C\int_0^te^{-\eta_7(t-\tau)}(\|\nabla\dot u\|_{L^2}+1)e^{-\eta_1\tau}d\tau\nonumber\\
&\le Ce^{-\eta_7t}+C\int_0^\frac t2e^{-\eta_7(t-\tau)}(\|\nabla\dot u\|_{L^2}+1)e^{-\eta_1\tau}d\tau
+\int_{\frac t2}^te^{-\eta_7(t-\tau)}(\|\nabla\dot u\|_{L^2}+1)e^{-\eta_1\tau}d\tau\nonumber\\
&\le Ce^{-\eta_7t}+Ce^{-\frac{\eta_7t}{2}}\bigg(\int_0^\frac t2\|\nabla\dot u\|_{L^2}^2d\tau\bigg)^\frac12
\bigg(\int_0^\frac t2e^{-2\eta_1 \tau}d\tau\bigg)^\frac12
+Ce^{-\frac{\eta_7t}{2}}\int_0^\frac t2 e^{-\eta_1\tau}d\tau
\nonumber\\
&\quad+Ce^{-\frac{\eta_1t}{2}}\int_{\frac t2}^te^{-\eta_7(t-\tau)}d\tau
+Ce^{-\frac{\eta_1t}{2}}\bigg(\int_{\frac t2}^t\|\nabla\dot u\|_{L^2}^2d\tau\bigg)^\frac12
\bigg(\int_0^\frac t2e^{-2\eta_7(t-\tau)}d\tau\bigg)^\frac12\nonumber\\
&\le C\Big(e^{-\frac{\eta_7t}{2}}+e^{-\frac{\eta_1t}{2}}\Big),
\end{align*}
from which, the conclusion follows.
\end{proof}

Now we are ready to prove Theorem \ref{thm1}.
\begin{proof}[Proof of Theorem \ref{thm1}]
On one hand, \eqref{1.8} follows from \eqref{w1}, \eqref{w17}, and \eqref{w42}. On the other hand, by \eqref{w17}, \eqref{w41}, \eqref{w57},
and Gagliardo--Nirenberg inequality, we find that
\begin{align*}
\|H\|_{L^\infty}\le C\|\na H\|_{L^2}^\f12\|\curl^2H\|_{L^2}^\f12+\|\na H\|_{L^2}\le Ce^{-\eta_1t}.
\end{align*}
This combined with \eqref{w61} yields \eqref{b1.11}.
\end{proof}

\section*{Conflict of interest}
The authors have no conflicts to disclose.

\section*{Data availability}
No data was used for the research described in the article.


\end{document}